\documentclass[a4paper, 12pt]{amsart}
\usepackage{amsmath, amssymb, eucal, amscd, amstext, enumerate}

\topmargin -0.1in \textwidth 6.25in \textheight 8in \oddsidemargin
-0.1in \evensidemargin -0.1in


\newtheorem{thm}{Theorem}[section]
\newtheorem{lem}[thm]{Lemma}
\newtheorem{eg}[thm]{Example}
\newtheorem{prop}[thm]{Proposition}
\newtheorem{cor}[thm]{Corollary}
\newtheorem{defn}[thm]{Definition}
\newtheorem{rem}[thm]{Remark}
\newtheorem{ntn}[thm]{Notation}

\newenvironment{prf}{{\noindent \textbf{Proof:}\ }}{\hfill $\Box$\\ \smallskip}

\numberwithin{equation}{section}

\newcommand{\id}{{\rm id}}
\newcommand{\ti}{\tilde}
\newcommand{\smnoind}{\smallskip\noindent}

\newcommand{\FZ}{\mathfrak{Z}}
\newcommand{\FU}{\mathfrak{U}}
\newcommand{\FN}{\mathfrak{N}}
\newcommand{\CF}{\mathcal{F}}

\newcommand{\CN}{\mathcal{N}}
\newcommand{\CU}{\mathcal{U}}
\newcommand{\bl}{\left}
\newcommand{\br}{\right}

\newcommand{\supp}{\operatorname{supp}}

\begin{document}
\title[Automatic continuity and $C_0(\Omega)$-linearity]{Automatic continuity and $C_0(\Omega)$-linearity of linear maps between $C_0(\Omega)$-modules}

\author{Chi-Wai Leung, Chi-Keung Ng  \and Ngai-Ching Wong }

\address[Chi-Wai Leung]{Department of Mathematics, The Chinese
University of Hong Kong, Hong Kong.}
\email{cwleung@math.cuhk.edu.hk}

\address[Chi-Keung Ng]{Chern Institute of Mathematics and LPMC, Nankai University, Tianjin 300071, China.}
\email{ckng@nankai.edu.cn}

\address[Ngai-Ching Wong]{Department of Applied Mathematics, National Sun Yat-sen University, Kaohsiung, 80424, Taiwan.}
\email{wong@math.nsysu.edu.tw}

\thanks{The authors are supported by National Natural Science Foundation of China (10771106) and Taiwan NSC grant (NSC96-2115-M-110-004-MY3).}

\date{\today}

\begin{abstract}
Let $\Omega$ be a locally compact Hausdorff space. We show that any
local $\mathbb{C}$-linear map (where ``local'' is a weaker notion than $C_0(\Omega)$-linearity) between Banach $C_0(\Omega)$-modules
are ``nearly $C_0(\Omega)$-linear'' and ``nearly bounded''. As an
application, a local $\mathbb{C}$-linear map $\theta$ between
Hilbert $C_0(\Omega)$-modules is automatically $C_0(\Omega)$-linear.
If, in addition, $\Omega$ contains no isolated point, then any $C_0(\Omega)$-linear map between Hilbert $C_0(\Omega)$-modules is
automatically bounded.
Another application is that if a sequence of maps
$\{\theta_n\}$ between two Banach spaces ``preserve
$c_0$-sequences'' (or ``preserve ultra-$c_0$-sequences''), then $\theta_n$ is bounded for large enough $n$
and they have a common bound. Moreover, we will show that if
$\theta$ is a bijective ``biseparating'' linear map from a ``full''
essential Banach $C_0(\Omega)$-module $E$ into a ``full'' Hilbert
$C_0(\Delta)$-module $F$ (where $\Delta$ is another locally compact
Hausdorff space), then $\theta$ is ``nearly bounded'' (in fact, it
is automatically bounded if $\Delta$ or $\Omega$ contains no isolated point)
and there exists a homeomorphism $\sigma: \Delta \rightarrow \Omega$
such that $\theta(e\cdot \varphi) = \theta(e)\cdot \varphi\circ
\sigma$ ($e\in E, \varphi\in C_0(\Omega)$).

\medskip
\noindent \emph{2000 Mathematics Subject Classification:} 46H40, 46L08, 46H25.\\
\noindent \emph{Keywords:} Banach modules, Banach bundles, local
mappings, separating mappings, automatic continuity,
$C_0(\Omega)$-linearity
\end{abstract}

\maketitle

\section{Introduction}

\medskip

A linear map $\theta$ between the spaces of continuous sections of two bundle spaces over the same
locally compact Hausdorff base space $\Omega$ is said to be \emph{local} if  for any continuous section $f$, one has $\supp \theta(f) \subseteq \supp f$,
or equivalently, for each $g\in C_0(\Omega)$,
$$
fg = 0 \quad\implies\quad \theta(f)g = 0.
$$
Consequently, local property is weaker than $C_0(\Omega)$-linearity.
In the case when the domain and the range bundles are over different base spaces, a more general notion is
defined; namely, \emph{disjointness preserving}, or \emph{separating} (see Section \ref{s:separating}).

\medskip

Local and disjointness preserving linear maps are found in many
researches in analysis. For example, a theorem of Peetre  \cite{Pe60}
states that local linear maps of the space of smooth functions
defined on a manifold modelled on $\mathbb{R}^n$ are exactly linear
differential operators (see, e.g., \cite{Na85}). This is further
extended to the case of vector-valued differentiable functions
defined on a finite dimensional manifold by Kantrowitz and Neumann
\cite{KN01} and Araujo  \cite{Ar04}.

\medskip

In the topological setting,
similar results have been obtained.  Local linear maps of the space
of continuous functions over a locally compact Hausdorff space are
multiplication operators, while disjointness preserving (separating) linear maps
between two such spaces over possibly different base spaces are
weighted composition operators
 (see, e.g., \cite{Ab83, Ar83, Pa84, Ja90, FH94,JW96}).  Among many
interesting questions arising from these two notions, quite a few
efforts has been put on the automatic continuity of such maps.  See,
e.g., \cite{AN80, BNT88, Ja90, JW96} for the scalar case, and
\cite{GJW03, AJ03studia, Ar04, AT05} for the vector-valued case.

\medskip

In this paper, we extend this context to local or separating linear maps between spaces of continuous
sections of vector bundles.
Note that similar to the correspondence developed by Swan \cite{Sw62} between
finite dimensional vector bundles over a locally compact Hausdorff space $\Omega$
and certain $C_0(\Omega)$-modules, the spaces of continuous sections of ``Banach bundles''
are certain Banach $C_0(\Omega)$-modules (see, e.g., \cite{DG83}, and Section \ref{s:prelim} below).

\medskip

One of the original motivation behind this work
is to investigate up to what extent will a local linear map between two
Banach $C_0(\Omega)$-modules be $C_0(\Omega)$-linear.
Surprisingly, on top
of finding that such maps are ``nearly $C_0(\Omega)$-linear'', we
find that they are also ``nearly bounded''. In fact, it is well
known that there are many unbounded $\mathbb{C}$-linear maps from an
infinite dimensional Banach space to another Banach space and so, if
$S$ is a finite set, there are many unbounded $C(S)$-module maps
from certain Banach $C(S)$-module to another Banach $C(S)$-module.
The interesting thing we discovered is that the above is, in many
cases, the ``only obstruction'' to the automatic boundedness of
$C_0(\Omega)$-module maps (see Proposition \ref{prop:fin-unbdd} as
well as Theorems \ref{thm:decomp} and \ref{thm:loc>mod}).

\medskip

More precisely, if $\theta$ is a local $\mathbb{C}$-linear map (not
assumed to be bounded) from an essential Banach $C_0(\Omega)$-module
$E$ to another such module $F$, then $\theta$ is ``nearly
$C_0(\Omega)$-linear'', in the sense that the induced map $\ti
\theta: E\rightarrow \ti F$ is a $C_0(\Omega)$-module map (where
$\ti F$ is the image of $F$ in the space of $C_0$-sections on the
canonical ``(H)-Banach bundle'' associated with $F$; see Section \ref{s:prelim}).
Moreover, $\theta$ is ``nearly bounded'' in the
sense that there exists a finite subset $S \subseteq \Omega$ such
that
$$
\sup_{\omega\in \Omega\setminus S} \sup_{\underset{\|e\| \leq
1}{e\in E;}} \bl\|\ti\theta(e)(\omega)\br\|\ <\ \infty.
$$
Furthermore,
if $F$ is ``$C_0(\Omega)$-normed'' (in particular, if $F$ is a Hilbert
$C_0(\Omega)$-module), then the finite set $S$ consists
of isolated points in $\Omega$, and
$$\theta\ =\ \theta_0 \oplus \bigoplus_{\omega\in S}\theta_\omega$$
where $\theta_0: E_{\Omega\setminus S} \rightarrow
F_{\Omega\setminus S}$ is a bounded $C_0(\Omega\setminus S)$-linear
map (where $E_{\Omega\setminus S}$ and $F_{\Omega\setminus S}$ are
the canonical essential Banach $C_0(\Omega\setminus S)$-modules
induced from $E$ and $F$ respectively) and $\theta_\omega$ are
(unbounded) $\mathbb{C}$-linear maps (see Theorems \ref{thm:loc>mod}
and \ref{thm:decomp}). Consequently, if $\Omega$ contains no isolated
point and $F$ is $C_0(\Omega)$-normed, then $\theta$ is
automatically bounded. As another application, if $X$ and $Y$ are
two Banach spaces and if $\theta_k: X\rightarrow Y$ is a sequence of
$\mathbb{C}$-linear maps (not assumed to be bounded) such that for
any $(x_n)\in c_0(X)$, we have $(\theta_n(x_n))\in c_0(Y)$, then
there exists $n_0$ with
$$\sup_{n\geq n_0} \|\theta_n\|\ <\ \infty.$$

\medskip

On the other hand, we will also study $\mathbb{C}$-linear maps
between two Banach modules over two different base spaces. In this
case, we will consider ``separating'' maps instead of local maps.
More precisely, if $\Omega$ and $\Delta$ are two
locally compact Hausdorff spaces, $E$ is a ``full'' essential Banach
$C_0(\Omega)$-module (see Remark \ref{rem:FV}(b)), and $F$ is a ``full'' Banach
$C_0(\Delta)$-normed module, then for any bijective linear map
$\theta: E\rightarrow F$ (not assumed to be bounded) with both
$\theta$ and $\theta^{-1}$ being separating, there exists a
homeomorphism $\sigma:\Delta \rightarrow \Omega$ such that
$\theta(e\cdot \varphi) = \theta(e)\cdot \varphi\circ \sigma$ ($e\in
E, \varphi\in C_0(\Omega)$), and there exists a finite set $S$
consisting of isolated points of $\Delta$ such that the restriction
of $\theta$ from $E_{\Omega\setminus \sigma(S)}$ to
$F_{\Delta\setminus S}$ is bounded.

\medskip

This paper is organised as follows.
In Section \ref{s:prelim}, we will first
collect some basic facts about the correspondence between Banach bundles and Banach $C_0(\Omega)$-modules.
In Section \ref{s:lemmas}, we will show
two technical lemmas concerning ``near $C_0(\Omega)$-linearity'' and
``near boundedness'' of certain mappings.  Section \ref{s:local} is devoted to
automatic $C_0(\Omega)$-linearity and automatic boundedness of local linear
mappings, while Section \ref{s:separating} is devoted to the automatic boundedness of
bijective biseparating linear mappings between Banach modules over
different base spaces.  Finally, as an attempt to a further generalisation,
we show in the Appendix that for an arbitrary C*-algebra $A$, every \emph{bounded}
local linear map from a Banach $A$-module into a Hilbert $A$-module is $A$-linear.
The boundedness assumption can be removed in the case when $A$ is finite dimension (Corollary \ref{cor:local>mod}).

\medskip

\section{Preliminaries and Notations}\label{s:prelim}

\medskip

Let us first recall
(mainly from \cite{DG83}) some basic terminologies and results
concerning Banach modules and Banach bundles.

\medskip

\begin{ntn}
\label{ntn-init}
In this article, $\Omega$ and $\Delta$ are two locally compact
Hausdorff spaces,  $E$ is an essential Banach $C_0(\Omega)$-module,
$F$ is an essential Banach $C_0(\Delta)$-module, and $\theta:
E\rightarrow F$ is a $\mathbb{C}$-linear map (not assumed to be
bounded).
Furthermore,
$\Omega_\infty$ and $\Delta_\infty$ are the one-point
compactifications of $\Omega$ and $\Delta$ respectively. We denote by
$\CN_\Omega(\omega)$ the set of all compact neighbourhoods of an
element $\omega$ in $\Omega$, and by ${\rm Int}_\Omega(S)$ the set
of all interior points of a subset $S$ in $\Omega$. Moreover, if
$U,V\subseteq \Omega$ such that the closure of $V$ is a compact
subset of ${\rm Int}_\Omega(U)$, we denote by $\CU_\Omega(V,U)$ the
collection of all $\lambda\in C_c(\Omega)$ with $0\leq \lambda \leq
1$, $\lambda \equiv 1$ on $V$ and the support of $\lambda$ lies inside ${\rm Int}_\Omega(U)$.
\end{ntn}

\medskip

\begin{defn}
Let $\Xi$ be a Hausdorff space and $p: \Xi\rightarrow \Omega$ be a
surjective continuous open map. Suppose that for each $\omega\in
\Omega$,
\begin{enumerate}[1).]
\item there exists a complex Banach space structure on $\Xi_\omega := p^{-1}(\omega)$ such that
its norm topology coincides with the topology on $\Xi_\omega$ (as a topological subspace of $\Xi$);

\item $\{W(\epsilon, U): \epsilon > 0, U\in \CN_\Omega (\omega)\}$
forms a neighbourhood basis for the zero element $0_\omega\in
\Xi_\omega$ where $W(\epsilon, U) := \{\xi\in p^{-1}(U): \|\xi\| <
\epsilon \}$;

\item the maps $\mathbb{C} \times \Xi\rightarrow \Xi$ and
$\{(\xi, \eta)\in \Xi\times \Xi: p(\xi) = p(\eta)\} \rightarrow \Xi$
given respectively, by the scalar multiplications and the additions
are continuous.
\end{enumerate}
Then $(\Xi, \Omega, p)$ (or simply, $\Xi$) is called an
\emph{(H)-Banach bundle} (respectively, an \emph{(F)-Banach bundle})
over $\Omega$ if $\xi\mapsto \|\xi\|$ is an upper-semicontinuous
(respectively, continuous) map from $\Xi$ to $\mathbb{R}_+$.
In this case,
$\Omega$ is called the \emph{base space} of $\Xi$, the map $p$ is
called the \emph{canonical projection} and $\Xi_\omega$ is called
the \emph{fibre} over $\omega\in \Omega$.
\end{defn}

\medskip

If $\Xi$ is an (H)-Banach bundle over $\Omega$ and $\Omega_0\subseteq
\Omega$ is an open set, then
$$\Xi_{\Omega_0}\ :=\ \ p^{-1}(\Omega_0)$$
is an (H)-Banach bundle over $\Omega_0$ and is called the
\emph{restriction of $\Xi$ to $\Omega_0$}.
If $\Xi$ is an (F)-Banach bundle, then so is $\Xi_{\Omega_0}$.

\medskip

\begin{defn}
If $\Lambda$ is an (H)-Banach bundle over $\Delta$, a map
$\rho: \Xi \rightarrow \Lambda$ is called a \emph{fibrewise linear map covering a map $\sigma: \Omega\rightarrow \Delta$}
if $\rho(\Xi_\omega) \subseteq \Lambda_{\sigma(\omega)}$ and the restriction $\rho_\omega: \Xi_\omega \rightarrow \Lambda_{\sigma(\omega)}$ is linear.
Moreover, a fibrewise linear map $\rho$ covering a continuous map $\sigma: \Omega\rightarrow \Delta$ is called a \emph{Banach bundle map} if $\rho$ is continuous.
A Banach bundle map $\rho$ is said to be \emph{bounded} if
$\sup_{\underset{\|\xi\|\leq 1}{\xi\in \Xi;}} \|\rho(\xi)\| <
\infty$.
\end{defn}

\medskip

For any map $e: \Omega \rightarrow \Xi$, we denote
$$|e|(\omega)\ :=\ \|e(\omega)\| \qquad (\omega\in \Omega).$$
Such an $e$ is called a \emph{$C_0$-section} on $\Xi$ if $e$ is
continuous, $p(e(\omega)) = \omega$ ($\omega\in \Omega$), and for
any $\epsilon > 0$, there exists a compact set $C\subseteq \Omega$
such that $|e|(\omega) < \epsilon$ ($\omega\in \Omega\setminus C$).
We put
$$
\Gamma_0(\Xi)\ :=\ \{e: \Omega \rightarrow \Xi \mid e {\rm\ is\ a\ }
C_0{\rm -section\ on\ } \Xi\}.
$$ Note that $|e|$ is always upper
semi-continuous for every $e\in \Gamma_{0}(\Xi)$ and $\Xi$ is an
(F)-Banach bundle if and only if all such $|e|$ are continuous.

\medskip

Next, we recall some terminologies and properties concerning an
essential Banach (right) $C_0(\Omega)$-module $E$ (regarded as a unital Banach $C(\Omega_\infty)$-module).
For any $\omega\in
\Omega_\infty$ and $S\subseteq \Omega_\infty$, we denote
$$
K_S\ :=\ \{\varphi\in C(\Omega_\infty): \varphi(S) = \{0\}\}, \quad
K_S^E\ :=\ \overline{E \cdot K_S} \quad {\rm and} \quad I^E_\omega\
:=\ \bigcup_{V\in \CN_{\Omega_\infty}(\omega)}K^E_V.
$$
For simplicity, we set $K_\omega^E := K_{\{\omega\}}^E$.
Note that $K_\infty^E = E$ because $E$ is an essential Banach $C_0(\Omega)$-module.
By
\cite[p.37]{DG83}, there exists an (H)-Banach bundle $\check \Xi^E$
over $\Omega_\infty$ with $\check \Xi^E_\omega = E / K_\omega^E$.
Since $\check \Xi^E_\infty = \{0\}$, if we set $\Xi^E := p^{-1}(\Omega)$, then $\Gamma_0(\Xi^E) \cong \Gamma_0(\check \Xi^E)$ under the canonical identification.
Furthermore, there exists a contraction
$$\sim\ :\ E\ \longrightarrow\ \Gamma_{0}(\Xi^E)$$
such that
$\ti e(\omega) = e + K_\omega^E$. We put $\ti E$ to be the
closure of the
image of
$\sim$.

On the other hand, if $\theta$ is as in Notation \ref{ntn-init}, we define
$$\ti \theta: E\rightarrow \ti F\qquad {\rm by} \qquad \ti
\theta(e) = \widetilde{\theta(e)} \quad (e\in E).$$

\medskip

\begin{defn}
Let $E$ be an essential Banach $C_0(\Omega)$-module.

\smnoind (a) $E$ is called a \emph{Banach $C_0(\Omega)$-convex
module} if for any $\varphi,\psi\in C(\Omega_\infty)_+$ with
$\varphi + \psi =1$, one has $\|x\varphi + y\psi\| \leq
\max\{\|x\|,\|y\|\}$.

\smnoind (b) $E$ is called a \emph{Banach $C_0(\Omega)$-normed
module} if there exists a map $|\cdot|: E \rightarrow C_0(\Omega)_+$
such that for any $x,y\in X$ and $a\in A$,
\begin{enumerate}[i).]
\item $|x+y| \leq |x| + |y|$;
\item $|xa| = |x||a|$;
\item $\|x\| = \| |x| \|$.
\end{enumerate}
\end{defn}

\medskip

Recall that every Hilbert $C_0(\Omega)$-module is
a Banach $C_0(\Omega)$-normed module, and every Banach $C_0(\Omega)$-normed module is
$C_0(\Omega)$-convex. On the other hand, an essential Banach
$C_0(\Omega)$-module $E$ is $C_0(\Omega)$-convex if and only if
$\sim$ is an isometric isomorphism onto $\Gamma_{0}(\Xi^E)$ (see
e.g. \cite[Theorem 2.5]{DG83}).
In this case, we will not distinguish $E$ and $\Gamma_{0}(\Xi^E)$.
Furthermore, $E$ is
$C_0(\Omega)$-normed if and only if $E$ is $C_0(\Omega)$-convex and
$\Xi^E$ is an (F)-Banach bundle (see e.g. \cite[~p.48]{DG83}).

\medskip

For any open subset $\Omega_0\subseteq \Omega$, we set $E_{\Omega_0}
:= K^E_{\Omega\setminus \Omega_0}$ and $\ti E_{\Omega_0} := \Gamma_{0}(\Xi^E_{\Omega_0})$. One can regard
$K_{\Omega\setminus {\Omega_0}}^E$ as an essential Banach
$C_0(\Omega_0)$-module under the identification $C_0(\Omega_0)\cong
K_{\Omega\setminus \Omega_0}$. Note that if $E$ is
$C_0(\Omega)$-convex, then $\ti E_{\Omega_0} = E_{\Omega_0}$.

\medskip

\begin{rem}
\label{count>inj} (a) Let $E$ be a Banach $C_0(\Omega)$-convex
module and $0_\omega$ is the zero element in the fibre
$\Xi^E_\omega$ ($\omega\in \Omega$). It is well-known that
$\omega\mapsto 0_\omega$ is a continuous map from $\Omega$ into
$\Xi^E$. Thus, if $\{\omega_i\}_{i\in I}$ is a net in $\Omega$
converging to $\omega_0\in \Omega$ and $e\in \bigcap_{i\in I}
K_{\omega_i}^E$, then $e\in K_{\omega_0}^E$. Consequently, if
$e\notin K_\omega^E$, there exists $U\in \CN_\Omega(\omega)$ such
that $e\notin K_\alpha^E$ for any $\alpha\in U$.

\noindent (b) For any $\omega\in \Omega$ and $e\in K_\omega^E$, there exists a net $\{e_V\}_{V\in \CN_{\Omega}(\omega)}$ such that $e_V\in K_V^E$ and $\|e - e_V\| \rightarrow 0$.

\smnoind
(c) Let $\Omega = \{\omega_1,\omega_2,...\}$ be a
countable compact Hausdorff space and $E$ be a Banach
$C(\Omega)$-module.
Then $$\bigcap_{\omega\in \Omega} K_\omega^E\ =\ \{0\},$$
or equivalently, the map $\sim$ is injective. In fact,
consider any $e\in \bigcap_{\omega\in \Omega} K_\omega^E$ and any
$\epsilon > 0$. For $k\in \mathbb{N}$, there exists
$\bar\varphi_k\in K_{\{\omega_k\}}$ with $\| e - e\bar\varphi_k\| <
\epsilon/2^{k+1}$. Thus, there exists $\varphi_k\in C(\Omega)$ with
$\varphi_k$ vanishing on an open neighbourhood $V_k$ of $\omega_k$
and $\|e - e \varphi_k\| < \epsilon/ 2^{k}$. Now, consider a finite
subcover $\{V_1,...,V_n\}$ for $\Omega$ and a continuous  partition of unity
$\{\psi_1,...\psi_n\}$ subordinated to $\{V_1,...,V_n\}$. Then
$\|e\|
= \bl\|e - e \sum_{k=1}^n \varphi_k \psi_k\br\|
\leq \sum_{k=1}^n \|e - e\varphi_k\|
< \epsilon$.
\end{rem}

\bigskip

\section{Some technical results}\label{s:lemmas}

\medskip

In this section, we will give two technical lemmas
(\ref{lem:C_0-lin} and \ref{lem:auto-bdd}) which are the crucial ingredients for all the results in this paper.
Before presenting them, let us give another automatic continuity type lemma that is needed for those two essential lemmas.

\medskip

\begin{lem}
\label{lem:cont-sigma} $\FZ_\theta := \{\nu\in \Delta: \ti
\theta(e)(\nu) = 0 {\rm\ for\ all\ } e\in E\}$ is a closed subset (where $\ti\theta$ is as in Section \ref{s:prelim}).
Moreover,
if $\sigma: \Delta_\theta\rightarrow \Omega_\infty$ (where $\Delta_\theta := \Delta \setminus \FZ_\theta$) is a map satisfying
$\theta(I_{\sigma(\nu)}^E)\subseteq K_\nu^F$ ($\nu\in
\Delta_\theta$), then $\sigma$ is continuous.
\end{lem}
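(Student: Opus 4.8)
The plan is to prove the two assertions separately, starting with the closedness of $\FZ_\theta$. First I would unwind the definition: since $\ti\theta(e)(\nu) = \theta(e) + K_\nu^F$ is the value at $\nu$ of the section $\widetilde{\theta(e)}\in\Gamma_0(\Xi^F)$, we have $\nu\in\FZ_\theta$ exactly when $\theta(e)\in K_\nu^F$ for every $e\in E$. Hence
\[
\Delta_\theta\ =\ \Delta\setminus\FZ_\theta\ =\ \bigcup_{e\in E}\{\nu\in\Delta:\ti\theta(e)(\nu)\neq 0\},
\]
and it suffices to show that each set in this union is open. This is precisely the openness behind Remark \ref{count>inj}(a): the section $\widetilde{\theta(e)}$ and the zero section $\nu\mapsto 0_\nu$ are both continuous maps from $\Delta$ into the Hausdorff total space $\Xi^F$, so the set on which they disagree (being the preimage of the complement of the diagonal) is open. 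Only continuity of these two sections is used, so the reasoning of Remark \ref{count>inj}(a) applies verbatim to the not-necessarily-convex module $F$, giving the closedness of $\FZ_\theta$.

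For the continuity of $\sigma$, I would argue by contradiction using nets and the compactness of $\Omega_\infty$. Suppose $\sigma$ fails to be continuous at some $\nu_0\in\Delta_\theta$, and put $\omega_0:=\sigma(\nu_0)$. Then there is a net $(\nu_i)$ in $\Delta_\theta$ with $\nu_i\to\nu_0$ but $\sigma(\nu_i)\not\to\omega_0$; choosing an open neighbourhood $W$ of $\omega_0$ that is avoided cofinally and passing to the corresponding subnet, the points $\sigma(\nu_i)$ all lie in the compact set $\Omega_\infty\setminus W$, so a further subnet satisfies $\sigma(\nu_i)\to\omega_1$ for some $\omega_1\in\Omega_\infty\setminus W$; in particular $\omega_1\neq\omega_0$. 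Extracting this subnet carefully, so that the limit $\omega_1$ is genuinely distinct from $\omega_0$ (rather than merely having the full net fail to converge), is the first point that needs attention.

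The heart of the argument is a cut-off decomposition. Since $\Omega_\infty$ is compact Hausdorff, hence normal, I would pick disjoint compact neighbourhoods $V_0\in\CN_{\Omega_\infty}(\omega_0)$ and $V_1\in\CN_{\Omega_\infty}(\omega_1)$ and, by Urysohn's lemma, a function $\lambda\in C(\Omega_\infty)$ with $\lambda\equiv 1$ on $V_0$ and $\lambda\equiv 0$ on $V_1$. As $\nu_0\in\Delta_\theta$, choose $e\in E$ with $\ti\theta(e)(\nu_0)\neq 0$ and split $e=e\lambda+e(1-\lambda)$. Because $1-\lambda$ vanishes on the neighbourhood $V_0$ of $\omega_0$, we get $e(1-\lambda)\in K_{V_0}^E\subseteq I_{\omega_0}^E=I_{\sigma(\nu_0)}^E$, so $\theta(e(1-\lambda))\in K_{\nu_0}^F$ by hypothesis; by linearity this yields $\ti\theta(e\lambda)(\nu_0)=\ti\theta(e)(\nu_0)\neq 0$. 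Applying the openness from the first paragraph to $\widetilde{\theta(e\lambda)}$, we have $\ti\theta(e\lambda)(\nu)\neq 0$ on a neighbourhood of $\nu_0$, hence $\ti\theta(e\lambda)(\nu_i)\neq 0$ eventually; the contrapositive of $\theta(I_{\sigma(\nu_i)}^E)\subseteq K_{\nu_i}^F$ then forces $e\lambda\notin I_{\sigma(\nu_i)}^E$ for all large $i$.

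Finally I would derive the contradiction from the opposite side. Since $\lambda$ vanishes on $V_1$, we have $e\lambda\in K_{V_1}^E$ with $V_1$ a compact neighbourhood of $\omega_1$; and because $\sigma(\nu_i)\to\omega_1\in{\rm Int}_{\Omega_\infty}(V_1)$, eventually $V_1\in\CN_{\Omega_\infty}(\sigma(\nu_i))$, so $e\lambda\in K_{V_1}^E\subseteq I_{\sigma(\nu_i)}^E$ --- contradicting the previous line once $i$ is large enough for both ``eventually'' conditions to hold simultaneously. This contradiction shows $\sigma(\nu_i)\to\omega_0$ after all, i.e. $\sigma$ is continuous. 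Note that nothing in the topological manipulations distinguishes the point at infinity, so the case $\omega_0=\infty$ or $\omega_1=\infty$ is covered automatically. The main obstacle is conceptual rather than computational: one must read the support condition $\theta(I_{\sigma(\nu)}^E)\subseteq K_\nu^F$ through its contrapositive, and realise that replacing $e$ by $e\lambda$ --- localised near $\omega_1$ yet still detected by $\ti\theta$ at $\nu_0$ and at the nearby $\nu_i$ --- is exactly what turns the hypothesis against itself.
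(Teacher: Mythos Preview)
Your argument is correct and follows essentially the same route as the paper: a cut-off decomposition $e = e\lambda + e(1-\lambda)$ separating the two cluster values $\omega_0 = \sigma(\nu_0)$ and $\omega_1$, combined with Remark~\ref{count>inj}(a), yields the contradiction. The only organisational difference is that the paper quantifies over all $e\in E$ and concludes $\nu_0\in\FZ_\theta$, whereas you fix one $e$ witnessing $\nu_0\notin\FZ_\theta$ at the outset and reach the contradiction $e\lambda\in I_{\sigma(\nu_i)}^E$ versus $e\lambda\notin I_{\sigma(\nu_i)}^E$; correspondingly the paper applies Remark~\ref{count>inj}(a) as a limit statement ($\ti\theta(\cdot)(\nu_i)=0\Rightarrow\ti\theta(\cdot)(\nu_0)=0$) while you use its contrapositive openness form, but these are the same fact.
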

\begin{prf}
It follows from Remark \ref{count>inj}(a) that $\FZ_\theta$ is
closed. Suppose on the contrary, that there exists a net
$\{\nu_i\}_{i\in I}$ in $\Delta_\theta$ that converges to $\nu_0\in
\Delta_\theta$ but $\sigma(\nu_i) \nrightarrow \sigma(\nu_0)$. Then
there are $U,W\in \CN_{\Omega_\infty}(\sigma(\nu_0))$ with $U\subseteq {\rm Int}_{\Omega_\infty}(W)$ and $\{i\in I:
\sigma(\nu_i)\notin {\rm Int}_\Omega(W)\}$ being cofinal.
As
$\Omega_\infty$ is compact, by passing to a subnet if necessary, we
can assume that $\{\sigma(\nu_i)\}$ converges to an element
$\omega\in \Omega_\infty$, and there exists $V\in
\CN_{\Omega_\infty}(\omega)$ with $V \cap U = \emptyset$. Pick any
$e\in E$ and $\varphi\in \CU_{\Omega_\infty}(V,\Omega_\infty
\setminus U)$. Since $\sigma(\nu_i) \rightarrow \omega$, we see that
$e(1-\varphi)\in I_{\sigma(\nu_i)}^E$ when $i$ is large enough and so eventually,
$$
\ti \theta(e(1-\varphi))(\nu_i)\ =\ 0
$$
(by the hypothesis). By Remark \ref{count>inj}(a), we see that $\ti
\theta(e(1-\varphi))(\nu_0) = 0$. On the other hand, we have
$\theta(e\varphi) \in K_{\nu_0}^F$ (because $e\varphi\in
I_{\sigma(\nu_0)}^E$) and $\theta(e)\in K_{\nu_0}^F$, which gives
the contradiction that $\nu_0\in \FZ_\theta$.
\end{prf}

\medskip

\begin{rem}
\label{rem:FV} (a) Note that for any $\nu\in \FZ_\theta$, one has
\begin{equation}
\label{char-FV} \theta(I_{\omega}^E)\subseteq K_\nu^F \qquad
(\omega\in \Omega).
\end{equation}
Consequently, if we extend $\sigma$ in Lemma \ref{lem:cont-sigma} by setting  $\sigma(\nu)$ arbitrarily for each $\nu\in \FZ_\theta$, then $\theta(I_{\sigma(\nu)}^E)\subseteq K_\nu^F$ ($\nu\in \Delta$) but one should not expect such $\sigma$ to be continuous.

\smnoind (b) $\theta$ is said to be \emph{full} if $\FZ_\theta =
\emptyset$.
Moreover, $E$ is said to be \emph{full} if $\id:
E\rightarrow E$ is full (or equivalently, $E \neq K_\omega^E$ for any $\omega\in
\Omega$).

\smnoind (c) One can use our proof for Lemma \ref{lem:cont-sigma}
to give the following (probably known) result:
\begin{quotation}
Suppose that $\sigma: \Delta \rightarrow \Omega$ is a map and $\Phi:
C_0(\Omega)\rightarrow C_b(\Delta)$ is a $\mathbb{C}$-linear map
satisfying $\Phi(\lambda\cdot\psi) = \Phi(\lambda)\cdot (\psi\circ
\sigma)$ ($\lambda,\psi\in C_0(\Omega)$), and for any $\nu\in
\Delta$, there exists $\lambda\in C_0(\Omega)$ with
$\Phi(\lambda)(\nu)\neq 0$. Then $\sigma$ is continuous.
\end{quotation}
\end{rem}

\medskip

\begin{lem}
\label{lem:C_0-lin} Let $\sigma: \Delta_\theta\rightarrow \Omega$ be
a map satisfying $\theta(I_{\sigma(\nu)}^E)\subseteq K^F_\nu$
($\nu\in \Delta_\theta$).

\smnoind (a) If $\FU_{\theta} := \bl\{\nu\in \Delta: \sup_{\|e\|
\leq 1} \|\ti \theta(e)(\nu)\| = \infty\br\}$, then $\FU_\theta
\subseteq \Delta_\theta$,
$$\sup_{\nu\in \Delta\setminus \FU_{\theta};\|e\| \leq 1}
\|\ti \theta(e)(\nu)\|\ <\ \infty$$ (we use the convention that
$\sup \emptyset = 0$) and $\sigma(\FU_{\theta})$ is a finite set.

\smnoind (b) If $\FN_{\theta,\sigma} := \left\{\nu\in \Delta_\theta:
\theta(K_{\sigma(\nu)}^E)\nsubseteq K_\nu^F\right\}$, then
$\FN_{\theta,\sigma}\subseteq \FU_{\theta}$ and
$\sigma(\FN_{\theta,\sigma})$ consists of non-isolated points in
$\Omega$.

\smnoind (c) If, in addition, $\sigma$ is an injection sending
isolated points in $\Delta_\theta$ to isolated points in $\Omega$,
then $\ti \theta(e\cdot \varphi) = \ti \theta(e) \cdot
\varphi\circ\sigma$ ($e\in E, \varphi\in C_0(\Omega)$).
\end{lem}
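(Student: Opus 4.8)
The plan is to establish the three parts in order, using two facts throughout: that $\sim$ is a contraction, so $\|\ti\theta(e)(\nu)\|=\|\widetilde{\theta(e)}(\nu)\|\le\|\theta(e)\|$ for every $\nu\in\Delta$ and $e\in E$; and that each $\ti\theta(e)$ lies in $\Gamma_0(\Xi^F)$ and is therefore bounded. For part (a), the inclusion $\FU_\theta\subseteq\Delta_\theta$ is immediate, since the defining supremum is $0$ at every $\nu\in\FZ_\theta$. For the uniform bound off $\FU_\theta$, I would let $q_\nu\colon F\to\Xi^F_\nu=F/K^F_\nu$ be the quotient map and consider the family $\{q_\nu\circ\theta\colon\nu\in\Delta\setminus\FU_\theta\}$ of linear maps into the Banach spaces $\Xi^F_\nu$: each is bounded exactly because $\nu\notin\FU_\theta$, and the family is pointwise bounded because $\|q_\nu\theta(e)\|=\|\ti\theta(e)(\nu)\|\le\|\theta(e)\|$. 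As $E$ is complete, the uniform boundedness principle yields $\sup_{\nu\notin\FU_\theta;\,\|e\|\le1}\|\ti\theta(e)(\nu)\|<\infty$. Finiteness of $\sigma(\FU_\theta)$ I would prove by a gliding-hump contradiction: if it were infinite, choose distinct $\omega_n=\sigma(\nu_n)$ with $\nu_n\in\FU_\theta$, pass to a subsequence whose points admit pairwise disjoint compact neighbourhoods (possible by Hausdorffness and local compactness, even when the $\omega_n$ accumulate), and pick $\lambda_n\in C_c(\Omega)$ with $0\le\lambda_n\le1$, $\lambda_n\equiv1$ near $\omega_n$, and pairwise disjoint supports. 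Choosing $e_n$ with $\|e_n\|\le1$ and $\|\ti\theta(e_n)(\nu_n)\|\ge4^n$, I set $e:=\sum_n2^{-n}e_n\lambda_n\in E$. The locality encoded in $\theta(I^E_{\sigma(\nu)})\subseteq K^F_\nu$ shows that $\ti\theta(e)(\nu_n)$ is unaffected by multiplying $e$ by a bump equal to $1$ near $\omega_n$; together with the disjointness of supports this isolates the $n$-th summand and gives $\ti\theta(e)(\nu_n)=2^{-n}\ti\theta(e_n)(\nu_n)$, so $\|\ti\theta(e)(\nu_n)\|\ge2^n\to\infty$, contradicting the boundedness of $\ti\theta(e)\in\Gamma_0(\Xi^F)$.

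For part (b), the key point is that $q_\nu\theta$ is bounded whenever $\nu\notin\FU_\theta$. Since $K^E_{\sigma(\nu)}=\overline{I^E_{\sigma(\nu)}}$ by Remark \ref{count>inj}(b), and $q_\nu\theta$ annihilates $I^E_{\sigma(\nu)}$ by hypothesis, continuity of $q_\nu\theta$ forces it to annihilate $K^E_{\sigma(\nu)}$ as well, i.e.\ $\theta(K^E_{\sigma(\nu)})\subseteq K^F_\nu$; equivalently $\nu\notin\FN_{\theta,\sigma}$. This proves $\FN_{\theta,\sigma}\subseteq\FU_\theta$. For the second assertion, suppose $\omega=\sigma(\nu)$ were isolated in $\Omega$; then $\{\omega\}\in\CN_{\Omega_\infty}(\omega)$, so $K^E_\omega\subseteq I^E_\omega$ and hence $K^E_\omega=I^E_\omega$, whereupon the hypothesis gives $\theta(K^E_\omega)\subseteq K^F_\nu$ and $\nu\notin\FN_{\theta,\sigma}$. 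Thus $\sigma(\nu)$ is non-isolated for every $\nu\in\FN_{\theta,\sigma}$.

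For part (c), I would first verify the identity at every $\nu\notin\FN_{\theta,\sigma}$. Writing $\omega=\sigma(\nu)$ and fixing $\lambda\in C_c(\Omega)$ with $\lambda\equiv1$ near $\omega$, decompose $e\varphi=\varphi(\omega)\,e\lambda+e\varphi(1-\lambda)+e(\varphi-\varphi(\omega))\lambda$. The second summand lies in $I^E_\omega$ and the third in $K^E_\omega$, so $\ti\theta(\cdot)(\nu)$ kills both — the former by the hypothesis, the latter because $\nu\notin\FN_{\theta,\sigma}$ — leaving $\ti\theta(e\varphi)(\nu)=\varphi(\omega)\,\ti\theta(e)(\nu)=\varphi(\sigma(\nu))\,\ti\theta(e)(\nu)$. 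It then remains to treat the finitely many points of $\FN_{\theta,\sigma}$; they are finite in number because $\FN_{\theta,\sigma}\subseteq\FU_\theta$ and, once $\sigma$ is injective, $\FU_\theta$ is finite by part (a). For such a $\nu$, part (b) makes $\sigma(\nu)$ non-isolated, so the contrapositive of the standing hypothesis forces $\nu$ to be non-isolated in $\Delta_\theta$; hence $\nu$ is a limit of a net in $\Delta_\theta\setminus\FN_{\theta,\sigma}$, along which the identity already holds. Passing to the limit — using that $\ti\theta(e\varphi)$ and $\ti\theta(e)$ are continuous sections, that $\sigma$ is continuous by Lemma \ref{lem:cont-sigma}, and that scalar multiplication on $\Xi^F$ is continuous — gives the identity at $\nu$; on $\FZ_\theta$ both sides vanish.

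The step I expect to be the main obstacle is this last extension across $\FN_{\theta,\sigma}$: it is precisely here that the injectivity of $\sigma$ (making $\FU_\theta$, hence $\FN_{\theta,\sigma}$, finite) and the \emph{isolated-to-isolated} hypothesis (upgrading non-isolation of $\sigma(\nu)$ to non-isolation of $\nu$, so that $\nu$ is approximable within the good set) are both indispensable, and the limit must be taken in the bundle topology of $\Xi^F$ rather than merely in norm. A secondary difficulty is the topological separation in the gliding-hump of part (a): arranging pairwise disjoint supports for the $\lambda_n$ even when the points $\omega_n$ accumulate.
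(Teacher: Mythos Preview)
Your proposal is correct and follows essentially the same route as the paper. The only differences are cosmetic: in part (a) the paper uses the weights $1/k^{2}$ and the threshold $n^{3}$ (and writes $\lambda_k^{2}$ so the residual factors visibly as an element of $E\cdot K_{U_n}$), and it handles the varying codomains in the uniform boundedness argument by embedding all fibres into $\bigoplus^{c_0}_{\nu}\Xi^F_\nu$; in part (c) the paper uses the shorter decomposition $e\varphi-\varphi(\sigma(\nu))e\in K^E_{\sigma(\nu)}$ in place of your three-term splitting, and organises the case split as $\nu\notin\FU_\theta$ / $\nu\in\FU_\theta$ isolated / $\nu\in\FU_\theta$ non-isolated rather than $\nu\notin\FN_{\theta,\sigma}$ / $\nu\in\FN_{\theta,\sigma}$, but the logic and the use of Lemma~\ref{lem:cont-sigma} for the limiting step are identical.
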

\begin{prf}
(a) The first conclusion is clear.
We put $Y$ to be the $c_0$-direct sum $\bigoplus_{\nu\in \Delta}^{c_0}
\Xi^F_\nu$.
For every $\nu\in \Delta\setminus \FU_\theta$, one can regard $e\mapsto \ti\theta(e)(\nu)$ as a bounded $\mathbb{C}$-linear map from $E$ into $Y$ (note that $\left\|\ti\theta(e)(\nu)\right\| \leq \|\theta(e)\|$), the uniform boundedness principle will
give the second conclusion.
Assume now that $\sigma(\FU_\theta)$ is
infinite. For $n=1$, we can find $\nu_1\in \Delta$ as well as
$e_1\in E$ with $\|e_1\| \leq 1$ and $\left\|\ti \theta(e_1)(\nu_1)\right\| >
1$. Inductively, we can find $\nu_n\in \Delta$ and $e_n\in E$ such
that $$\sigma(\nu_n)\ \neq\ \sigma(\nu_k)\ (k=1,...,n-1),\quad
\|e_n\| \leq 1 \quad {\rm and}\quad \|\ti \theta(e_n)(\nu_n)\| >
n^3.$$
There exist $n_1\in \mathbb{N}$ and $U_1\in
\CN_\Omega(\sigma(\nu_{n_1}))$ such that $\{n\in\mathbb{N}: n > n_1 {\rm \ and\ } \sigma(\nu_n)\notin U_1\}$ is infinite. Inductively, we can find a
subsequence $\{\nu_{n_k}\}$ and $U_k\in
\CN_\Omega(\sigma(\nu_{n_k}))$ ($k\in \mathbb{N}$) such that
$U_k\cap U_l = \emptyset$ for distinct $k,l \in \mathbb{N}$. Without
loss of generality, we assume that $n_k = k$. Pick $V_n\in
\CN_\Omega(\sigma(\nu_{n}))$ such that $V_n$ is subset of ${\rm
Int}_{\Omega}(U_n)$. Consider $\lambda_n\in \CU_\Omega(V_n, U_n)$
($n\in \mathbb{N}$) and notice that $\|e_n\lambda_n^2\| \leq 1$.
Define $e := \sum_{k=1}^\infty \frac{e_k\lambda_k^2}{k^2} \ \in\ E$ and take $n\in \mathbb{N}$.
Since
$$n^2e - e_n\lambda_n^2\ =\ n^2\left(\sum_{k\neq n} \frac{e_k \lambda_k}{k^2}\right)\left(\sum_{k\neq n} \lambda_k\right)\ \in\ K^E_{U_n},$$
we have $n^2\ti\theta(e)(\nu_n) = \ti \theta(e_n\lambda_n^2)(\nu_n)$
(by the hypothesis). On the other hand, as $e_n - e_n\lambda^2_n =
e_n(1 - \lambda^2_n)\in K^E_{V_n}$, we have,
$$
\left\|\ti \theta (e)\right\|\ \geq\ \left\|\ti
\theta(e)(\nu_n)\right\|\ = \ \frac{1}{n^2}\left\|\ti
\theta(e_n\lambda_n^2)(\nu_n)\right\| \ = \ \frac{1}{n^2}\left\|\ti
\theta(e_n)(\nu_n)\right\| \ >\ n,
$$
which contradicts the finiteness of $\|\ti \theta (e)\|$.

\smnoind (b) Consider $\nu\in \Delta \setminus \FU_{\theta}$ and denote
$\kappa := \sup_{\|e\| \leq 1} \left\|\ti \theta(e)(\nu)\right\| <
\infty$.
Pick any $e\in K_{\sigma(\nu)}^E$ and $e_V\in K_V^E$
($V\in \CN_\Omega(\sigma(\nu))$) with $\|e_V - e\| \rightarrow 0$ (Remark \ref{count>inj}(b)).
As $\theta(e_V)\in K_{\nu}^F$, one has
$$
\bl\|\ti \theta(e)(\nu)\br\|\ =\ \bl\|\ti\theta(e-e_V)(\nu)\br\|\
\leq\ \kappa \bl\|e-e_V\br\|,
$$
which shows that $\nu\in \Delta \setminus \FN_{\theta,\sigma}$.
Now, if $\sigma(\nu)$ is an isolated point in $\Omega$, then $\{\sigma(\nu)\} \in \CN_\Omega(\sigma(\nu))$, and we have the contradiction that  $\theta(K_{\sigma(\nu)}^E)\subseteq K_\nu^F$.
This gives second statement.

\smnoind (c) For any $\nu\in \Delta \setminus \FN_{\theta,\sigma}$
and $e\in E$, we have $e\varphi - e\varphi(\sigma(\nu)) = e (\varphi
- \varphi(\sigma(\nu))1) \in K_{\sigma(\nu)}^E$. Thus,
\begin{equation}
\label{mod-map} \ti \theta(e\varphi)(\nu)\ =\ \ti\theta(e)(\nu)
\varphi(\sigma(\nu)) \qquad (e\in E, \nu\in \Delta \setminus
\FN_{\theta,\sigma}).
\end{equation}
In particular, \eqref{mod-map} is true when $\nu\in \Delta\setminus
\FU_{\theta}$ (by part (b)) or when $\nu\in \FU_{\theta}$ is an
isolated point of $\Delta_\theta$ (by the hypothesis as well as part
(b)).
Suppose that $\nu\in \FU_{\theta}$ is a
non-isolated point of $\Delta_\theta$.
As $\sigma$ is injective, part (a) implies that $\FU_{\theta}$ is a
finite set.
Hence, there exists a net $\{\nu_i\}$ in
$\Delta_\theta\setminus \FU_{\theta}$ converging to $\nu$. Now, by
Lemma \ref{lem:cont-sigma},
$$
\ti \theta(e\varphi)(\nu)\ =\ \lim \ti \theta(e\varphi)(\nu_i)\ =\
\lim \ti \theta(e)(\nu_i)\varphi(\sigma(\nu_i))\ =\ \ti
\theta(e)(\nu)\varphi(\sigma(\nu)).
$$
\end{prf}

\medskip

\begin{rem}
\label{rem:remove-FV} Note that since $\FZ_\theta$ is closed, isolated points in $\Delta_\theta$ are the same as isolated points of $\Delta$.
Moreover, for any $\nu\in \FZ_\theta$, we have $\sup_{\|e\|\leq 1}
\|\ti\theta(e)(\nu)\| = 0$, and \eqref{char-FV} holds.
Therefore, Lemma \ref{lem:C_0-lin} remains valid if we replace all the $\Delta_\theta$ with $\Delta$ (in fact, the current form is stronger as any injection on $\Delta$ restricted to an injection on $\Delta_\theta$).
The same is true for all the remaining results
in this section.
\end{rem}

\medskip

If $\sigma$ is injective, then $\FU_{\theta}$ is finite and we have our first nearly automatically boundedness result which states that if $\theta$ is a ``module map through an injection $\sigma: \Delta\rightarrow \Omega$'' (one can relax this slightly to an injection on $\Delta_\theta$), then $\theta$ is ``bounded after taking away finite number of points from $\Delta$''.

\medskip

\begin{prop}
\label{prop:fin-unbdd} Let $\Omega$ and $\Delta$ be two locally
compact Hausdorff spaces. Let $E$ and $F$ be an essential Banach
$C_0(\Omega)$-module and an essential Banach $C_0(\Delta)$-module
respectively, and let $\theta: E\rightarrow F$ be a
$\mathbb{C}$-linear map (not assumed to be bounded). Suppose that
$\sigma: \Delta_\theta \rightarrow \Omega$ is an injection satisfying $\theta(e\cdot \varphi)(\nu) =
\theta(e)(\nu)\varphi(\sigma(\nu))$ ($e\in E, \varphi\in
C_0(\Omega), \nu \in \Delta_\theta$). Then
there exist a finite
subset $T\subseteq \Delta$ and
$\kappa > 0$ such that
$$\sup_{\nu\in \Delta \setminus T} \|\ti \theta(e)(\nu)\|\ \leq \
\kappa \|e\| \qquad (e\in E).$$
\end{prop}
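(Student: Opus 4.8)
The plan is to reduce Proposition \ref{prop:fin-unbdd} directly to Lemma \ref{lem:C_0-lin}, whose hypotheses are almost exactly what is furnished here. First I would observe that the stated condition $\theta(e\cdot\varphi)(\nu) = \theta(e)(\nu)\varphi(\sigma(\nu))$ for all $e\in E$, $\varphi\in C_0(\Omega)$, and $\nu\in\Delta_\theta$ is precisely the assertion that the induced map $\ti\theta$ satisfies $\ti\theta(e\cdot\varphi)(\nu) = \ti\theta(e)(\nu)\,\varphi(\sigma(\nu))$ on $\Delta_\theta$. From this one recovers the algebraic hypothesis $\theta(I_{\sigma(\nu)}^E)\subseteq K_\nu^F$ needed to invoke Lemma \ref{lem:C_0-lin}: if $e\in I_{\sigma(\nu)}^E$, then $e$ lies in some $K_V^E$ with $V\in\CN_{\Omega_\infty}(\sigma(\nu))$, so $e$ is approximated by elements of the form $e'\cdot\varphi$ with $\varphi$ vanishing near $\sigma(\nu)$; the module identity then forces $\ti\theta(e)(\nu)=0$, i.e. $\theta(e)\in K_\nu^F$. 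Thus the map $\sigma$ here is exactly of the kind required in the hypothesis of Lemma \ref{lem:C_0-lin}.

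Next I would apply part (a) of Lemma \ref{lem:C_0-lin}. Since $\sigma$ is assumed to be an injection, the conclusion that $\sigma(\FU_\theta)$ is a finite set upgrades, by injectivity, to the statement that $\FU_\theta$ itself is a finite subset of $\Delta_\theta$. I would then set
$$
T\ :=\ \FU_\theta\ \cup\ \FZ_\theta^{\,c\text{-part}},
$$
but more cleanly I would simply take $T := \FU_\theta$, which is finite. Part (a) also gives the uniform bound
$$
\kappa\ :=\ \sup_{\nu\in\Delta\setminus\FU_\theta;\ \|e\|\leq 1}\|\ti\theta(e)(\nu)\|\ <\ \infty,
$$
and by linearity in $e$ this is exactly the desired estimate $\|\ti\theta(e)(\nu)\|\leq\kappa\|e\|$ for all $\nu\in\Delta\setminus T$ and all $e\in E$. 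The convention $\sup\emptyset = 0$ handles the degenerate case where $\FU_\theta=\emptyset$, in which $\kappa$ may be taken to be any positive number (or one argues separately that $\theta$ is then globally bounded on $\Delta_\theta$, and bounded on $\FZ_\theta$ trivially since $\ti\theta(e)(\nu)=0$ there by Remark \ref{rem:remove-FV}).

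I would be careful about the distinction between $\Delta$ and $\Delta_\theta$. The injection $\sigma$ is defined only on $\Delta_\theta$, but the desired supremum ranges over $\Delta\setminus T$. For $\nu\in\FZ_\theta$ we have $\ti\theta(e)(\nu)=0$ by definition of $\FZ_\theta$ (equivalently Remark \ref{rem:remove-FV}), so such points contribute nothing to the supremum and may be left outside $T$. Invoking Remark \ref{rem:remove-FV}, which says Lemma \ref{lem:C_0-lin} holds verbatim with $\Delta$ in place of $\Delta_\theta$, makes this bookkeeping automatic and is the cleanest route.

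I do not expect a serious obstacle here: the proposition is essentially a corollary of Lemma \ref{lem:C_0-lin}(a) once injectivity of $\sigma$ is used to pass from finiteness of $\sigma(\FU_\theta)$ to finiteness of $\FU_\theta$. The only point demanding genuine care is verifying that the pointwise module identity over $\Delta_\theta$ does entail the algebraic containment $\theta(I_{\sigma(\nu)}^E)\subseteq K_\nu^F$ required to apply the lemma; this uses Remark \ref{count>inj}(b) to approximate elements of $K_V^E$, and is the one step where I would write out a short argument rather than cite directly.
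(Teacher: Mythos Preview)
Your overall strategy is exactly the paper's: Proposition \ref{prop:fin-unbdd} is presented there as an immediate consequence of Lemma \ref{lem:C_0-lin}(a), with injectivity of $\sigma$ converting finiteness of $\sigma(\FU_\theta)$ into finiteness of $\FU_\theta$, after which one takes $T=\FU_\theta$ and reads off the uniform bound. Your handling of $\FZ_\theta$ via Remark \ref{rem:remove-FV} is also correct.

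The one place that needs correction is precisely the step you flagged as requiring care, namely the passage from the pointwise module identity to $\theta(I^E_{\sigma(\nu)})\subseteq K^F_\nu$. Your sketch approximates $e\in K^E_V$ by elements $e'\cdot\varphi$ with $\varphi$ vanishing near $\sigma(\nu)$ and then applies the identity to each approximant; but $\theta$ is \emph{not} assumed bounded, so knowing $\ti\theta(e'\cdot\varphi)(\nu)=0$ along an approximating net does not let you conclude $\ti\theta(e)(\nu)=0$. (Remark \ref{count>inj}(b) does not rescue this: it still leaves you needing continuity of $e\mapsto\ti\theta(e)(\nu)$.) The fix avoids any limit through $\theta$. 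Choose $V'\in\CN_\Omega(\sigma(\nu))$ with $V'\subseteq{\rm Int}_\Omega(V)$ and $\lambda\in\CU_\Omega(V',V)$. Multiplication by $\lambda$ is a bounded operator on $E$ that kills $E\cdot K_V$, hence kills its closure $K^E_V$; thus $e\cdot\lambda=0$. Now apply the given identity with $\varphi=\lambda\in C_0(\Omega)$:
\[
0\ =\ \ti\theta(e\cdot\lambda)(\nu)\ =\ \ti\theta(e)(\nu)\,\lambda(\sigma(\nu))\ =\ \ti\theta(e)(\nu),
\]
since $\lambda(\sigma(\nu))=1$. This yields $\theta(e)\in K^F_\nu$ without any boundedness assumption, and the rest of your argument goes through unchanged.
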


\medskip

\begin{lem}
\label{lem:auto-bdd} Let $\sigma:\Delta_\theta\rightarrow \Omega$ be a map
satisfying $\theta(I_{\sigma(\nu)}^E)\subseteq K_\nu^F$ ($\nu\in
\Delta_\theta$). Suppose, in addition, that $F$ is a Banach
$C_0(\Delta)$-normed module.

\smnoind (a) $\FN_{\theta,\sigma}$ is an open subset of $\Delta$.

\smnoind (b) If $\sigma$ is injective, then $\FU_{\theta}$ is a
finite set consisting of isolated points of $\Delta$.
If, in addition, $\FU_\theta \neq \Delta$, then $F =
F_{\Delta\setminus \FU_{\theta}} \oplus \bigoplus_{\nu\in
\FU_{\theta}}\Xi^F_\nu$ and
$$
\theta_0 := P_{\theta,\sigma} \circ \theta|_{E_{\Omega\setminus
\sigma(\FU_{\theta})}}:E_{\Omega\setminus \sigma(\FU_{\theta})}
\rightarrow F_{\Delta\setminus \FU_{\theta}}
$$
is a bounded linear map (where $P_{\theta,\sigma}: F \rightarrow
F_{\Delta\setminus \FU_{\theta}}$ is the canonical projection) such
that
\begin{equation}
\label{re-mod} \theta_0(e\cdot \varphi) = \theta_0(e)\cdot
\varphi\circ\sigma \qquad (e\in E_{\Omega\setminus
\sigma(\FU_{\theta})}, \varphi\in C_0(\Omega\setminus
\sigma(\FU_{\theta})))
\end{equation}
(note that the value of $\sigma$ on $\FZ_\theta$ can be set arbitrarily).
\end{lem}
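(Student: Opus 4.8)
\emph{Setup and the engine.} The plan is to distill one clean consequence of normedness and feed it, together with Lemma \ref{lem:C_0-lin}, into both parts. Throughout I would use that $\sigma$ is continuous on $\Delta_\theta$ (Lemma \ref{lem:cont-sigma}) and that, as $F$ is $C_0(\Delta)$-normed, $\Xi^F$ is an (F)-Banach bundle, so $F=\Gamma_0(\Xi^F)$ isometrically under the sup-norm and, for each fixed $e\in E$, the function $\nu\mapsto\bl\|\ti\theta(e)(\nu)\br\|$ is continuous on $\Delta$. The engine I would establish first is that $\FU_\theta$ is open. Indeed, by Lemma \ref{lem:C_0-lin}(a) there is a single $\kappa$ with $\bl\|\ti\theta(e)(\nu)\br\|\le\kappa\|e\|$ for all $\nu\in\Delta\setminus\FU_\theta$ and all $e$; if $\nu_i\to\nu_0$ with $\nu_i\in\Delta\setminus\FU_\theta$, then for each fixed $e$ continuity gives $\bl\|\ti\theta(e)(\nu_0)\br\|=\lim_i\bl\|\ti\theta(e)(\nu_i)\br\|\le\kappa\|e\|$, and taking the supremum over the unit ball shows $\nu_0\notin\FU_\theta$. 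Thus $\Delta\setminus\FU_\theta$ is closed. This uses neither injectivity of $\sigma$ nor anything beyond Lemma \ref{lem:C_0-lin}(a) and normedness of $F$.

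\emph{Part (a).} Since $\FN_{\theta,\sigma}\subseteq\Delta_\theta$ and $\Delta_\theta$ is open, I would prove instead that $\Delta_\theta\setminus\FN_{\theta,\sigma}$ is closed in $\Delta_\theta$. Let $\nu_0\in\Delta_\theta$ be the limit of a net $\{\nu_i\}$ in $\Delta_\theta\setminus\FN_{\theta,\sigma}$, and suppose for contradiction that $\nu_0\in\FN_{\theta,\sigma}$; fix a witness $e_0\in K^E_{\sigma(\nu_0)}$ with $\ti\theta(e_0)(\nu_0)\neq0$. Because $\FN_{\theta,\sigma}\subseteq\FU_\theta$ (Lemma \ref{lem:C_0-lin}(b)) we have $\nu_0\in\FU_\theta$, and the engine gives $\nu_i\in\FU_\theta$ eventually. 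The decisive point is that $\sigma(\FU_\theta)$ is finite (Lemma \ref{lem:C_0-lin}(a)): $\sigma(\nu_i)$ then ranges over a finite set, so after passing to a subnet I may assume $\sigma(\nu_i)=\omega$ is constant, whence $\omega=\sigma(\nu_0)$ by continuity of $\sigma$. Now $\nu_i\notin\FN_{\theta,\sigma}$ and $\sigma(\nu_i)=\sigma(\nu_0)$ force $\theta\bl(K^E_{\sigma(\nu_0)}\br)\subseteq K^F_{\nu_i}$, so $\ti\theta(e_0)(\nu_i)=0$; letting $i\to\infty$ and using continuity of $\nu\mapsto\bl\|\ti\theta(e_0)(\nu)\br\|$ gives $\ti\theta(e_0)(\nu_0)=0$, a contradiction. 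Hence $\nu_0\notin\FN_{\theta,\sigma}$ and $\FN_{\theta,\sigma}$ is open.

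\emph{Part (b).} Assuming $\sigma$ injective, finiteness of $\sigma(\FU_\theta)$ (Lemma \ref{lem:C_0-lin}(a)) makes $\FU_\theta$ finite, and combined with the engine (a finite open subset of a Hausdorff space consists of isolated points) this shows $\FU_\theta$ is a finite set of isolated points of $\Delta$. Each isolated $\nu\in\FU_\theta$ then splits $\Xi^F_\nu$ off as a direct summand of $F=\Gamma_0(\Xi^F)$, yielding $F=F_{\Delta\setminus\FU_\theta}\oplus\bigoplus_{\nu\in\FU_\theta}\Xi^F_\nu$ with $F_{\Delta\setminus\FU_\theta}=\Gamma_0\bl(\Xi^F_{\Delta\setminus\FU_\theta}\br)$ carrying the sup-norm over $\Delta\setminus\FU_\theta$. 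For $e$ in the domain of $\theta_0=P_{\theta,\sigma}\circ\theta|_{E_{\Omega\setminus\sigma(\FU_\theta)}}$, the $\kappa$-bound of Lemma \ref{lem:C_0-lin}(a) over $\Delta\setminus\FU_\theta$ gives $\|\theta_0(e)\|=\sup_{\nu\in\Delta\setminus\FU_\theta}\bl\|\ti\theta(e)(\nu)\br\|\le\kappa\|e\|$, so $\theta_0$ is bounded. Finally, since $\FN_{\theta,\sigma}\subseteq\FU_\theta$, the identity \eqref{mod-map} holds at every $\nu\in\Delta\setminus\FU_\theta$; projecting to $F_{\Delta\setminus\FU_\theta}$ and reading the resulting fibrewise equality as an identity of $C_0$-sections produces \eqref{re-mod} (the $\FZ_\theta$-points lie in $\Delta\setminus\FU_\theta$ and contribute only zeros, so the arbitrary choice of $\sigma$ there is harmless).

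\emph{Main obstacle.} The one genuinely delicate feature is that at $\nu_0\in\FU_\theta$ the fibre map $e\mapsto\ti\theta(e)(\nu_0)$ is unbounded, so no approximation of $e_0\in K^E_{\sigma(\nu_0)}$ by elements of $E\cdot K_{\sigma(\nu_0)}$ can be pushed through $\theta$ to decide membership in $K^F_{\nu_0}$; a direct ``complement is closed'' argument stalls exactly here. What I expect to rescue part (a) is not an approximation but the rigidity supplied by two facts available \emph{before} the limit is taken, namely that $\FU_\theta$ is open and $\sigma(\FU_\theta)$ is finite: together they force the approaching net to have $\sigma$-value eventually equal to $\sigma(\nu_0)$, which lets the \emph{fixed} witness $e_0$ be evaluated along the net. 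I would therefore concentrate the real work on the engine and on the finiteness-of-$\sigma(\FU_\theta)$ reduction, since these are precisely what turn an a priori unbounded situation into a controllable one.
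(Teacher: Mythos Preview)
Your proof is correct and, in fact, more economical than the paper's. The key structural difference is your ``engine'': you first prove that $\FU_\theta$ is open (via the uniform bound $\kappa$ of Lemma \ref{lem:C_0-lin}(a) together with continuity of fibre norms in the (F)-bundle $\Xi^F$), and then feed this into both parts. The paper never isolates this fact. In part (a) the paper instead runs a case analysis on the approximating net $\{\nu_i\}$: when a cofinal subnet lies outside $\FU_\theta$, it uses the $\kappa$-bound at those $\nu_i$ together with an approximation $e_V\to e$ (Remark \ref{count>inj}(b)) to push $\|\theta(e)(\nu_i)\|\le\kappa\epsilon$ through to $\nu_0$ by norm continuity. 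Your engine short-circuits this entirely: once $\nu_0\in\FN_{\theta,\sigma}\subseteq\FU_\theta$ and $\FU_\theta$ is open, the net is eventually in $\FU_\theta$, so $\sigma(\nu_i)$ lands in the finite set $\sigma(\FU_\theta)$ and the argument collapses to the paper's ``easy'' case. In part (b) the paper argues isolatedness of $\FU_\theta$ by a direct contradiction with continuity of $|\theta(e)|$ (essentially your engine applied pointwise), whereas you get it immediately from ``finite $+$ open $\Rightarrow$ isolated''. What your route buys is a single clean lemma replacing two separate appeals to norm continuity; what the paper's route buys is that part (a) is proved without any reference to $\FU_\theta$ being open, so the approximation step makes visible exactly where the $C_0(\Delta)$-normed hypothesis enters beyond the uniform-boundedness input of Lemma \ref{lem:C_0-lin}(a).
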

\begin{prf}
Notice, first of all, that as $F$ is $C_0(\Omega)$-convex, one can regard
$\ti \theta = \theta$.

\smnoind (a) As $\Delta_\theta$ is open in $\Delta$ and
$\FN_{\theta,\sigma}\subseteq \FU_{\theta} \subseteq \Delta_\theta$, it suffices to show that
$\FN_{\theta,\sigma}$ is open in $\Delta_\theta$.
By Lemma \ref{lem:C_0-lin}(a),
$$
\kappa\ :=\ \sup_{\nu\notin \FU_\theta} \sup_{\|e\| \leq 1}
\|\theta(e)(\nu)\|\ < \ \infty.
$$
Let $\{\nu_i\}_{i\in I}$ be a net in $\Delta_\theta\setminus
\FN_{\theta,\sigma}$ converging to $\nu_0\in \Delta_\theta$, and $e$
be an arbitrary element in $K_{\sigma(\nu_0)}^E$. By Lemma
\ref{lem:cont-sigma}, we know that $\sigma(\nu_i) \rightarrow
\sigma(\nu_0)$.
Now, we consider two cases separately.
The first case is when $\{\sigma(\nu_i)\}_{i\in I}$ is finite.
In this case, by passing to subnet, we can assume that $\sigma(\nu_i)
= \sigma(\nu_0)$ ($i\in I$).
As $e(\sigma(\nu_0)) = 0$ and $\nu_i\notin \FN_{\theta,\sigma}$, we have $\theta(e)(\nu_i) = 0$ which gives $\theta(e)(\nu_0) = 0$, and so, $\theta(e)\in K_{\nu_0}^F$.
The second case (of $\{\sigma(\nu_i)\}_{i\in I}$ being infinite) can be subdivided into two cases.
More precisely, if there
exists $i_0\in I$ such that $\nu_j \in \FU_\theta$ for every $j\geq
i_0$, then we can assume that $\{\sigma(\nu_i)\}_{i\in I}\subseteq
\sigma(\FU_\theta)$ which is a finite set, and the above
implies that $\theta(e)\in K_{\nu_0}^F$. Otherwise, $\{i\in I:
\nu_i\notin \FU_\theta\}$ is cofinal, and by passing to a subnet, we
may assume that $\nu_i\notin \FU_\theta$ ($i\in I$). For any
$\epsilon > 0$, pick $V\in \CN_\Omega(\sigma(\nu_0))$ and $e_V\in
K_V^E$ with $\|e_V - e\| < \epsilon$. When $i$ is large enough,
$\sigma(\nu_i)\in V$ and $e_V(\sigma(\nu_i)) = 0$. Thus,
$$\|\theta(e)(\nu_i)\|\ =\ \|\theta(e - e_V)(\nu_i)\|\ \leq\ \kappa \epsilon.$$
By the continuity of the norm function on $\Xi^F$, we have
$\|\theta(e)(\nu_0)\| \leq \kappa\epsilon$ which implies that
$\theta(e)(\nu_0) = 0$.

\smnoind (b) By the hypothesis and Lemma \ref{lem:C_0-lin}(a),
one knows that $\FU_{\theta}$ is finite.
Without loss of generality, we assume $\Delta \neq \FU_{\theta}$.
Let
\begin{equation}
\label{sup-Delta_0} \kappa\ :=\ \sup_{\nu\in \Delta\setminus
\FU_{\theta}} \sup_{\|e\|\leq 1} \|\theta(e)(\nu)\|\ <\ \infty.
\end{equation}
Suppose on the contrary that there is $\nu_0\in \FU_{\theta}$ which
is not an isolated point in $\Delta$. As $\FU_{\theta}$ is finite,
there is a net $\{\nu_i\}$ in $\Delta \setminus \FU_{\theta}$ such
that $\nu_i\rightarrow \nu_0$.
By the definition of $\FU_{\theta}$,
there is $e\in E$ with $\|e\| \leq 1$ and $\|\theta(e)(\nu_0)\| >
\kappa + 1$.
However, this will contradict the continuity of
$|\theta(e)|$ (because of \eqref{sup-Delta_0}).
Now, as
$\FU_{\theta}$ is a finite set consisting of isolated points in
$\Delta$ and $F$ is the space of $C_0$-sections on $\Xi^F$, we see
that $$F\ =\ K_{\FU_{\theta}}^F \oplus \bigoplus_{\nu\in
\FU_{\theta}}\Xi^F_\nu.$$
By Lemma \ref{lem:C_0-lin}(b) and the argument of Lemma
\ref{lem:C_0-lin}(c) (more precisely, \eqref{mod-map}), one easily check that
$\theta_0$ will satisfy \eqref{re-mod}.
On the other hand, the
boundedness $\theta_0$ follows from \eqref{sup-Delta_0}.
\end{prf}

\medskip

Observe that in Lemmas \ref{lem:C_0-lin}(c) and
\ref{lem:auto-bdd}(b), one can replace the injectivity of $\sigma$
with the condition that $\sigma^{-1}(\omega)$ is at most finite for
any $\omega\in \Omega$.

\medskip

The following is our second nearly automatically boundedness result
that applies, in particular, when $F$ is a Hilbert
$C_0(\Delta)$-module.

\medskip

\begin{thm}
\label{thm:decomp} Let $\Omega$ and $\Delta$ be two locally compact
Hausdorff spaces. Let $E$ be an essential Banach
$C_0(\Omega)$-module, let $F$ be an essential Banach
$C_0(\Delta)$-normed module, and let $\theta: E\rightarrow F$ be a
$\mathbb{C}$-linear map (not assumed to be bounded). Suppose that
$\sigma: \Delta_\theta\rightarrow \Omega$ is an injection satisfying $\theta(I_{\sigma(\nu)}^E)\subseteq K^F_\nu$ ($\nu\in \Delta$).

\smnoind (a) If $\Delta$ contains no isolated point, then $\theta$
is bounded.

\smnoind (b) If $\sigma$ sends isolated points in $\Delta_\theta$ to
isolated points in $\Omega$, then $\FN_{\theta,\sigma} = \emptyset$
and
there exist a finite set $T$ consisting of isolated points of
$\Delta$, a bounded linear map $\theta_0:E_{\Omega\setminus
\sigma(T)} \rightarrow F_{\Delta\setminus T}$ as well as linear maps
$\theta_\nu: \Xi^E_{\sigma(\nu)} \rightarrow \Xi^F_\nu$ for all $\nu\in T$ such that $E = E_{\Omega\setminus \sigma(T)} \oplus
\bigoplus_{\nu\in T} \Xi^E_{\sigma(\nu)}$,
$F = F_{\Delta\setminus T} \oplus \bigoplus_{\nu\in T} \Xi^F_\nu$
and $\theta = \theta_0 \oplus \bigoplus_{\nu\in
T}\theta_\nu$.
\end{thm}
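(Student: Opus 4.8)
The plan is to read off the entire statement from Lemmas \ref{lem:C_0-lin} and \ref{lem:auto-bdd}, since the analytic work is already contained there; what remains is to package those results and to split the (only essential) module $E$ along the finite set of isolated points $\sigma(\FU_\theta)$. Throughout I would use, as in the proof of Lemma \ref{lem:auto-bdd}, that $F$ being $C_0(\Delta)$-normed lets us identify $\ti\theta$ with $\theta$ and gives $\|\theta(e)\| = \sup_{\nu\in\Delta}\|\theta(e)(\nu)\|$. For part (a): Lemma \ref{lem:auto-bdd}(b) shows that the injectivity of $\sigma$ forces $\FU_\theta$ to be a finite set of isolated points of $\Delta$; since $\Delta$ has no isolated point, $\FU_\theta=\emptyset$. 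Lemma \ref{lem:C_0-lin}(a) then yields $\kappa:=\sup_{\nu\in\Delta,\|e\|\leq 1}\|\theta(e)(\nu)\|<\infty$, and the $C_0(\Delta)$-norm converts this into $\|\theta(e)\|\leq\kappa\|e\|$, so $\theta$ is bounded.

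For part (b) I would first establish $\FN_{\theta,\sigma}=\emptyset$. Combine $\FN_{\theta,\sigma}\subseteq\FU_\theta$ (Lemma \ref{lem:C_0-lin}(b)) with the facts that $\FU_\theta$ consists of isolated points of $\Delta$, hence of $\Delta_\theta$ (Lemma \ref{lem:auto-bdd}(b) and Remark \ref{rem:remove-FV}), and that $\sigma(\FN_{\theta,\sigma})$ consists of non-isolated points of $\Omega$ (Lemma \ref{lem:C_0-lin}(b)); the hypothesis that $\sigma$ sends isolated points of $\Delta_\theta$ to isolated points of $\Omega$ then yields a contradiction unless $\FN_{\theta,\sigma}$ is empty. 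Now set $T:=\FU_\theta$, a finite set of isolated points of $\Delta$ by Lemma \ref{lem:auto-bdd}(b), so that $\sigma(T)$ is a finite set of isolated points of $\Omega$. Lemma \ref{lem:auto-bdd}(b) supplies the decomposition $F=F_{\Delta\setminus T}\oplus\bigoplus_{\nu\in T}\Xi^F_\nu$ together with the bounded map $\theta_0=P_{\theta,\sigma}\circ\theta|_{E_{\Omega\setminus\sigma(T)}}$, while on the domain side the clopen idempotents $\chi_{\{\omega\}}$ ($\omega\in\sigma(T)$) give the matching splitting $E=E_{\Omega\setminus\sigma(T)}\oplus\bigoplus_{\nu\in T}\Xi^E_{\sigma(\nu)}$ with $\Xi^E_{\sigma(\nu)}\cong E\chi_{\{\sigma(\nu)\}}$.

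It then remains to check that $\theta$ is block-diagonal for these decompositions. That $\theta(E_{\Omega\setminus\sigma(T)})\subseteq F_{\Delta\setminus T}$ is immediate from $\FN_{\theta,\sigma}=\emptyset$: any $e\in K_{\sigma(T)}^E$ lies in each $K_{\sigma(\nu)}^E$, so $\theta(e)\in K_\nu^F$ for every $\nu\in T$, whence $\theta_0=\theta|_{E_{\Omega\setminus\sigma(T)}}$ and is bounded. For $\nu_0\in T$ and $e\in E\chi_{\{\sigma(\nu_0)\}}$ one observes that $e\in I_{\sigma(\nu)}^E$ whenever $\sigma(\nu)\neq\sigma(\nu_0)$, since $\chi_{\{\sigma(\nu_0)\}}$ vanishes on the neighbourhood $\Omega\setminus\{\sigma(\nu_0)\}$ of $\sigma(\nu)$; the hypothesis $\theta(I_{\sigma(\nu)}^E)\subseteq K_\nu^F$ then forces $\theta(e)(\nu)=0$ there, and $\theta(e)(\nu)=0$ on $\FZ_\theta$ by definition. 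Since $\sigma$ is injective on $\Delta_\theta$ and $\nu_0\in\FU_\theta\subseteq\Delta_\theta$, the only surviving index is $\nu=\nu_0$, so $\theta(e)\in\Xi^F_{\nu_0}$ and $\theta_{\nu_0}$ is the resulting compression $\Xi^E_{\sigma(\nu_0)}\to\Xi^F_{\nu_0}$. Assembling over the summands gives $\theta=\theta_0\oplus\bigoplus_{\nu\in T}\theta_\nu$.

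I expect the main obstacle to be the bookkeeping of this last paragraph: verifying that the merely essential module $E$ genuinely splits off its fibres over the finite set $\sigma(T)$ of isolated points through the idempotents $\chi_{\{\sigma(\nu)\}}$ (with $E\chi_{\{\sigma(\nu)\}}\cong\Xi^E_{\sigma(\nu)}$), and that all off-diagonal blocks of $\theta$ vanish, while keeping track of the points of $\FZ_\theta$. The degenerate case $\Delta=\FU_\theta$ (which would make $\Delta$ finite and $F_{\Delta\setminus T}=\{0\}$) falls outside the scope of Lemma \ref{lem:auto-bdd}(b) and should be recorded separately, but it is trivial.
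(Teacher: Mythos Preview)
Your proposal is correct and follows the same route as the paper: both parts are read off from Lemmas \ref{lem:C_0-lin} and \ref{lem:auto-bdd}. The paper's own proof is two sentences --- part (a) is ``Lemma \ref{lem:auto-bdd}(b)'' and part (b) is ``Lemma \ref{lem:C_0-lin}(c) plus Lemma \ref{lem:auto-bdd}(b), sharpened because $\FN_{\theta,\sigma}=\emptyset$'' --- whereas you have unpacked exactly what those citations mean, including the isolated-point argument for $\FN_{\theta,\sigma}=\emptyset$ (which is the content hidden behind the paper's reference to Lemma \ref{lem:C_0-lin}(c)), the splitting of $E$ via the idempotents $\chi_{\{\sigma(\nu)\}}$, and the block-diagonality check that the paper leaves entirely to the reader.
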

\begin{prf}
(a) This follows directly from Lemma \ref{lem:auto-bdd}(b).

\smnoind (b) The first conclusion follows from Lemma
\ref{lem:C_0-lin}(c)
and the second conclusion follows from Lemma \ref{lem:auto-bdd}(b) (notice that we have a sharper conclusion here since $\FN_{\theta,\sigma} = \emptyset$).
\end{prf}

\bigskip

\section{Applications to local linear mappings}\label{s:local}

\medskip

In the section, we will consider the case when $\Delta = \Omega$, $\sigma = \id$, and the $\mathbb C$-linear map $\theta$ is a \emph{local
map} in the sense that $\theta(e) \cdot \varphi = 0$ whenever $e\in
E$ and $\varphi\in C_0(\Omega)$ satisfying $e\cdot \varphi = 0$. It
is obvious that any $C_0(\Omega)$-module map is local.

\medskip

\begin{rem}
\label{rem:loc>good} Suppose that $\theta$ is local. Let
$U,V\subseteq \Omega$ be open sets with the closure of $V$ being a
compact subset of $U$, and consider $\lambda\in \CU_\Omega(V,U)$.
For any $e\in K^E_U$ and any $\epsilon > 0$, there exists
$\varphi\in K_U$ with $\|e - e\varphi\| < \epsilon$. Thus, $e\lambda
= 0$ which implies that $\theta(e)\lambda = 0$ and $\theta(e) =
\theta(e)(1 - \lambda)\in K_V^F$. This shows that $\sigma = \id$
will satisfy the hypothesis in all the results in Section \ref{s:lemmas}.
\end{rem}

\medskip

The following theorem (which follows directly from the results in
Section \ref{s:lemmas} as well as Remark \ref{rem:loc>good}) is our main result
concerning local linear maps.

\medskip

\begin{thm}
\label{thm:loc>mod} Let $\Omega$ be a locally compact Hausdorff
space. Suppose that $E$ and $F$ are essential Banach
$C_0(\Omega)$-modules, and $\theta: E\rightarrow F$ is a local
$\mathbb{C}$-linear map (not assumed to be bounded).

\smnoind (a) $\ti \theta$ is a $C_0(\Omega)$-module map and
there exist a finite
subset $T\subseteq \Delta$ and
$\kappa > 0$ such that
$\sup_{\nu\in \Delta \setminus T} \|\ti \theta(e)(\nu)\| \leq
\kappa \|e\|$ ($e\in E$).

\smnoind (b) If, in addition, $F$ is a Banach $C_0(\Omega)$-normed module, then
$\theta$ is a $C_0(\Omega)$-module map and
there exist a finite set $T$ consisting of isolated points of
$\Omega$, a bounded linear map $\theta_0:E_{\Omega\setminus
T} \rightarrow F_{\Omega\setminus T}$ as well as a linear map
$\theta_\nu: \Xi^E_{\nu} \rightarrow \Xi^F_\nu$ for each $\nu\in T$
such that $E = E_{\Omega\setminus T} \oplus
\bigoplus_{\nu\in T} \Xi^E_{\nu}$,
$F = F_{\Omega\setminus T} \oplus \bigoplus_{\nu\in T} \Xi^F_\nu$
and $\theta = \theta_0 \oplus \bigoplus_{\nu\in
T}\theta_\nu$.
\end{thm}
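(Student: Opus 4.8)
The plan is to deduce the whole statement from the Section~\ref{s:lemmas} machinery by specialising to $\Delta = \Omega$ and $\sigma = \id$. The one point that must be checked is that locality forces the standing hypothesis $\theta(I_{\sigma(\nu)}^E)\subseteq K_\nu^F$, and this is exactly Remark~\ref{rem:loc>good}: for $e\in K_U^E$ and a bump function $\lambda\in\CU_\Omega(V,U)$ one has $e\lambda=0$, so locality gives $\theta(e)\lambda=0$ and hence $\theta(e)=\theta(e)(1-\lambda)\in K_V^F$; shrinking $V$ down to $\nu$ yields $\theta(I_\nu^E)\subseteq K_\nu^F$. Thus $\sigma=\id\colon\Omega\to\Omega$, which is injective and sends isolated points to isolated points, satisfies every hypothesis used below (and by Remark~\ref{rem:remove-FV} we may freely work over all of $\Omega$ rather than $\Delta_\theta$).

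For part (a) I would first invoke Lemma~\ref{lem:C_0-lin}(c): since $\id$ is injective and preserves isolated points, it gives $\ti\theta(e\cdot\varphi)=\ti\theta(e)\cdot(\varphi\circ\id)=\ti\theta(e)\cdot\varphi$, which is precisely the assertion that $\ti\theta$ is a $C_0(\Omega)$-module map. Read fibrewise, this says $\theta(e\cdot\varphi)(\nu)=\theta(e)(\nu)\,\varphi(\nu)$, so the hypothesis of Proposition~\ref{prop:fin-unbdd} holds with $\sigma=\id$; that proposition then delivers the finite set $T\subseteq\Omega$ and the constant $\kappa$ with $\sup_{\nu\notin T}\|\ti\theta(e)(\nu)\|\le\kappa\|e\|$, finishing (a).

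For part (b) the extra assumption that $F$ is $C_0(\Omega)$-normed makes it $C_0(\Omega)$-convex, so the canonical contraction $\sim\colon F\to\Gamma_0(\Xi^F)$ is an isometric isomorphism and we may identify $\theta$ with $\ti\theta$; in particular $\theta$ itself is a $C_0(\Omega)$-module map by part (a). The remaining decomposition is then immediate from Theorem~\ref{thm:decomp}(b) applied to $\sigma=\id$: it produces $\FN_{\theta,\sigma}=\emptyset$, a finite set $T$ of isolated points of $\Omega$, the splittings $E=E_{\Omega\setminus T}\oplus\bigoplus_{\nu\in T}\Xi^E_\nu$ and $F=F_{\Omega\setminus T}\oplus\bigoplus_{\nu\in T}\Xi^F_\nu$, a bounded $\theta_0$, and the fibre maps $\theta_\nu$ with $\theta=\theta_0\oplus\bigoplus_{\nu\in T}\theta_\nu$.

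Because the substantive arguments---the uniform-boundedness estimate in Lemma~\ref{lem:C_0-lin}(a), the openness and finiteness analysis in Lemma~\ref{lem:auto-bdd}, and the decomposition in Theorem~\ref{thm:decomp}---are already carried out for general $\sigma$, there is no genuinely hard step left here. The only thing demanding care is the bookkeeping between $\theta$ and $\ti\theta$: part (a) is necessarily phrased for $\ti\theta$, the image of $\theta$ inside the section space, and it is solely the $C_0(\Omega)$-normed hypothesis of (b) that lets us transfer the conclusion back to $\theta$ itself.
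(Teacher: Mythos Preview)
Your proposal is correct and follows essentially the same approach as the paper: the paper states that the theorem ``follows directly from the results in Section~\ref{s:lemmas} as well as Remark~\ref{rem:loc>good}'', and you have accurately identified which results are invoked (Lemma~\ref{lem:C_0-lin}(c) and Proposition~\ref{prop:fin-unbdd} for part~(a), Theorem~\ref{thm:decomp}(b) for part~(b)) and correctly handled the bookkeeping between $\theta$ and $\ti\theta$.
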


\medskip

It is natural to ask if one can relax the assumption of $F$ being
$C_0(\Omega)$-normed to $C_0(\Omega)$-convex in the second statement
of Theorem \ref{thm:loc>mod} (in particular, whether it is true that every $C_0(\Omega)$-module map
from an essential Banach $C_0(\Omega)$-module to an essential Banach
$C_0(\Omega)$-convex module is automatically bounded provided that $\Omega$
contains no isolated point). Unfortunately, it is not the case as
can be seen by the following simple example.

\medskip

\begin{eg}
Let $E := C([0,1]) \oplus^\infty X$ and $F := C([0,1]) \oplus^\infty
Y$, where $X$ and $Y$ are two infinite dimensional Banach spaces.
Then $E$ is an essential Banach $C([0,1])$-convex module under the
multiplication: $(e, x)\cdot \varphi = (e\cdot \varphi, x
\varphi(0))$ ($e, \varphi\in C([0,1]); x\in X$). In the same way,
$F$ is an essential Banach $C([0,1])$-convex module. Suppose that
$R: X \rightarrow Y$ is an unbounded linear map and $\theta:
E\rightarrow F$ is given by $\theta(e,x) = (e,R(x))$ ($e\in
C([0,1]); x\in X$). Then $\theta$ is a $C([0,1])$-module map which
is not bounded (as its restriction on $X$ is $R$). In this case, we
have $\FU_{\theta} = \{0\}$.
\end{eg}

\medskip

\begin{cor}\label{cor:bun-al-bdd}
Let $\Omega$ be a locally compact Hausdorff space.
Any local
$\mathbb{C}$-linear $\theta$ from an essential Banach
$C_0(\Omega)$-module to a Hilbert $C_0(\Omega)$-module is a
$C_0(\Omega)$-module map.
Moreover, if $\Omega$ contains no isolated
point, then any such $\theta$ is automatically bounded.
\end{cor}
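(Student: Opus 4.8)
The plan is to derive this as an immediate consequence of Theorem \ref{thm:loc>mod}, whose hypotheses are tailored to precisely this situation. The first step is to observe, as recalled in Section \ref{s:prelim}, that every Hilbert $C_0(\Omega)$-module is a Banach $C_0(\Omega)$-normed module. Thus if $F$ is a Hilbert $C_0(\Omega)$-module, then $F$ is in particular $C_0(\Omega)$-normed (and hence $C_0(\Omega)$-convex, so that $\ti\theta$ may be identified with $\theta$), and the standing hypotheses of both parts of Theorem \ref{thm:loc>mod} are met.

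For the first assertion, I would simply invoke Theorem \ref{thm:loc>mod}(b): since $\theta$ is local and $F$ is $C_0(\Omega)$-normed, that theorem already concludes that $\theta$ is a $C_0(\Omega)$-module map (and, as a bonus, produces the accompanying finite exceptional set $T$ of isolated points together with the direct-sum decomposition $\theta = \theta_0 \oplus \bigoplus_{\nu\in T}\theta_\nu$).

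For the second assertion, I would note that the finite set $T$ supplied by Theorem \ref{thm:loc>mod}(b) consists of isolated points of $\Omega$. Hence, if $\Omega$ contains no isolated point, then $T = \emptyset$, so the decomposition collapses to $\theta = \theta_0$, which is bounded by construction. Equivalently, one may appeal directly to Theorem \ref{thm:decomp}(a) with $\Delta = \Omega$ and $\sigma = \id$, the latter being an injection satisfying the required containment $\theta(I_{\sigma(\nu)}^E)\subseteq K_\nu^F$ by Remark \ref{rem:loc>good}.

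There is no genuine obstacle here: all the substantive work has been carried out in the technical Lemmas \ref{lem:C_0-lin} and \ref{lem:auto-bdd} and packaged into Theorem \ref{thm:loc>mod}. The only point requiring attention is the preliminary identification of a Hilbert module as a $C_0(\Omega)$-normed module, which is a standard fact recorded in Section \ref{s:prelim}; once this is in place, the corollary is a direct reading-off of the relevant theorems.
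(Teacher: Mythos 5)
Your proposal is correct and follows exactly the route the paper intends: the corollary is stated as a direct consequence of Theorem \ref{thm:loc>mod}(b), using the standard fact that a Hilbert $C_0(\Omega)$-module is $C_0(\Omega)$-normed, with the boundedness claim following because the exceptional set $T$ consists of isolated points and is therefore empty when $\Omega$ has none. No gaps.
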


\medskip

\begin{rem}
Let $L_{C_0(\Omega)}(E;C_0(\Omega))$ (respectively,
$\mathcal{B}_{C_0(\Omega)}(E;C_0(\Omega))$) be the ``algebraic
dual'' (respectively, ``topological dual'') of $E$, i.e. the
collection of all $C_0(\Omega)$-module maps (respectively, bounded $C_0(\Omega)$-module maps)
from $E$ into $C_0(\Omega)$. An application of Corollary
\ref{cor:bun-al-bdd} is that the algebraic dual and the topological
dual of $E$ coincide in many cases:
\begin{quotation}
If $\Omega$ is a locally compact Hausdorff space having no isolated
point and $E$ is an essential Banach $C_0(\Omega)$-module, then
$\mathcal{B}_{C_0(\Omega)}(E;C_0(\Omega)) =
L_{C_0(\Omega)}(E;C_0(\Omega))$.
\end{quotation}
\end{rem}

\medskip

\begin{cor}
Let $\Xi$ and $\Lambda$ be respectively an
(H)-Banach bundle and an (F)-Banach bundle over the same base space
$\Omega$.
If $\rho: \Xi \rightarrow \Lambda$ is a fibrewise linear
map covering $\id$ (without any boundedness nor continuity assumption) such that
$\rho\circ e \in \Gamma_{0}(\Lambda)$ for every $e\in
\Gamma_{0}(\Xi)$, then there exists a finite subset $S\subseteq
\Omega$ consisting of isolated points such that $\rho$ restricts to
a bounded Banach bundle map $\rho_0: \Xi_{\Omega\setminus S}
\rightarrow \Lambda_{\Omega\setminus S}$.
\end{cor}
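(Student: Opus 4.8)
The plan is to reduce the statement to Theorem \ref{thm:loc>mod}(b) by transferring everything from the bundle map $\rho$ to the induced map on section spaces. Concretely, I would set $E := \Gamma_0(\Xi)$ and $F := \Gamma_0(\Lambda)$, both regarded as essential Banach $C_0(\Omega)$-modules under the pointwise action. Since $\Xi$ is an (H)-Banach bundle, $E$ is $C_0(\Omega)$-convex, and since $\Lambda$ is an (F)-Banach bundle, $F$ is $C_0(\Omega)$-normed; moreover, under the bundle--module correspondence of Section \ref{s:prelim} we may identify $\Xi^E$ with $\Xi$ and $\Xi^F$ with $\Lambda$. Define $\theta : E \rightarrow F$ by $\theta(e) := \rho\circ e$. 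This is well defined precisely by the hypothesis that $\rho\circ e \in \Gamma_0(\Lambda)$ for every $e\in\Gamma_0(\Xi)$, and it is $C_0(\Omega)$-linear (in particular local): because $\rho$ is fibrewise linear covering $\id$, we get $\theta(e\cdot\varphi)(\omega) = \rho_\omega\bl(e(\omega)\varphi(\omega)\br) = \varphi(\omega)\,\rho_\omega(e(\omega)) = (\theta(e)\cdot\varphi)(\omega)$.

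Next I would apply Theorem \ref{thm:loc>mod}(b) to $\theta$, which is legitimate since $F$ is $C_0(\Omega)$-normed. It produces a finite set $S := T$ of isolated points of $\Omega$ together with a bounded linear map $\theta_0 : E_{\Omega\setminus S} \rightarrow F_{\Omega\setminus S}$, say with $\|\theta_0\| = \kappa < \infty$, which is just the restriction of $\theta$ to sections over $\Omega\setminus S$. Since $E$ is convex, $E_{\Omega\setminus S} = \Gamma_0(\Xi_{\Omega\setminus S})$, and $\theta_0(e)(\omega) = \rho_\omega(e(\omega))$ for all $\omega\in\Omega\setminus S$. Putting $\rho_0 := \rho|_{\Xi_{\Omega\setminus S}}$, it then remains only to verify that $\rho_0$ is a bounded Banach bundle map, i.e. that it is fibrewise bounded by $\kappa$ and continuous. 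The boundedness is immediate from the abundance of sections: because $E$ is $C_0(\Omega)$-convex, the fibre norm of any $\xi\in\Xi_\omega$ (with $\omega\in\Omega\setminus S$) equals the quotient norm $\inf\{\|e\| : e\in E_{\Omega\setminus S},\ e(\omega)=\xi\}$, so for each $\epsilon>0$ there is $e$ with $e(\omega)=\xi$ and $\|e\|<\|\xi\|+\epsilon$; then $\|\rho_\omega(\xi)\| = \|\theta_0(e)(\omega)\| \le \kappa\|e\| < \kappa(\|\xi\|+\epsilon)$, and letting $\epsilon\to 0$ gives $\|\rho_0(\xi)\|\le\kappa\|\xi\|$.

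The main obstacle I expect is the continuity of $\rho_0$, where one must upgrade ``sections are carried to sections'' together with the fibrewise bound into genuine continuity in the bundle topology. Here I would fix a net $\xi_i\to\xi$ in $\Xi_{\Omega\setminus S}$ with base points $\omega_i := p(\xi_i) \to \omega := p(\xi)$, choose (again by abundance of sections) some $e\in E_{\Omega\setminus S}$ with $e(\omega)=\xi$, and split into two pieces. First, $\rho(e(\omega_i)) = \theta_0(e)(\omega_i)\to \theta_0(e)(\omega) = \rho(\xi)$ by continuity of the section $\theta_0(e)$. Second, since the scalar multiplication and addition on $\Lambda$ (hence subtraction) are continuous and $e(\omega_i)\to e(\omega)=\xi$, the difference $\xi_i - e(\omega_i)$ (taken in the fibre $\Xi_{\omega_i}$) converges to $\xi-\xi = 0_\omega$; the neighbourhood basis $\{W(\epsilon,U)\}$ of $0_\omega$ then forces $\|\xi_i - e(\omega_i)\|\to 0$, whence by fibrewise linearity and the bound $\kappa$ we get $\|\rho(\xi_i) - \rho(e(\omega_i))\| = \|\rho_{\omega_i}(\xi_i - e(\omega_i))\| \le \kappa\|\xi_i - e(\omega_i)\|\to 0$. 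Combining the two pieces through continuity of addition in $\Lambda$ yields $\rho(\xi_i)\to\rho(\xi)$, so $\rho_0$ is continuous. Being fibrewise linear, bounded by $\kappa$, and covering $\id$, $\rho_0$ is therefore a bounded Banach bundle map, which finishes the proof with $S=T$.
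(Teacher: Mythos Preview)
Your proposal is correct and follows exactly the route the paper intends: the corollary is stated in the paper without proof, as an immediate consequence of Theorem \ref{thm:loc>mod}(b) applied to $\theta(e)=\rho\circ e$ on $E=\Gamma_0(\Xi)$ and $F=\Gamma_0(\Lambda)$. You have simply spelled out the details the paper omits, in particular the passage from the bounded section map $\theta_0$ back to boundedness and continuity of $\rho_0$ at the bundle level; these verifications are all sound.
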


\medskip

Let $X$ be a Banach space. We denote by $\ell^\infty(X)$ and
$c_0(X)$ the set of all bounded sequences and the set of all
$c_0$-sequences in $X$, respectively.
We recall that $\ell^\infty \cong C(\beta\mathbb{N})$ where $\beta\mathbb{N}$ is the Stone-Cech compactification of $\mathbb{N}$ (which can be identified with the collection of all ultrafilters on $\mathbb{N}$).

\medskip

\begin{prop}
\label{prop:seq-maps} Let $X$ and $Y$ be Banach spaces, and let
$\theta_k: X\rightarrow Y$ ($k\in \mathbb{N}\cup \{\infty\}$) be
linear maps (not assumed to be bounded). For any sequence
$\{x_k\}_{k\in \mathbb{N}}$ in $X$, we put $\theta(\{x_k\}_{k\in
\mathbb{N}}) := \{\theta_k(x_k)\}_{k\in \mathbb{N}}$.

\smnoind (a) If $\theta(c_0(X))\subseteq c_0(Y)$, then there exists
$n_0 \in \mathbb{N}$ such that $\sup_{n \geq n_0} \|\theta_n\| <
\infty$.

\smnoind (b) If $\lim_{k\rightarrow \infty} \theta_k(x_k) =
\theta_\infty(x)$ for any $\{x_k\}_{k\in
\mathbb{N}}\in\ell^\infty(X)$ with $\lim_{k\rightarrow \infty} x_k =
x$, then $\theta_\infty$ is bounded, and there is $n_0 \in
\mathbb{N}$ such that $\sup_{n \geq n_0} \|\theta_n\| < \infty$.

\smnoind (c) Suppose that $\theta(\ell^\infty(X))\subseteq
\ell^\infty(Y)$, and
$\lim_\CF \theta_k(x_k) =0$
for every
$\{x_k\}_{k\in \mathbb{N}}\in \ell^\infty(X)$ and every ultrafilter
$\CF$ on $\mathbb{N}$ with $\lim_\CF x_k =0$. Then there exist
$\CF_1,...,\CF_n \in \beta\mathbb{N}$ with $\sup_{\CF \neq
\CF_1,...,\CF_n} \|\theta_\CF\| < \infty$ (where $\theta_\CF:
\Xi^{\ell^\infty(X)}_\CF \rightarrow \Xi^{\ell^\infty(Y)}_\CF$ is
the induced map). In particular, $\sup_{n \geq n_0} \|\theta_n\| <
\infty$ for some $n_0 \in \mathbb{N}$.
\end{prop}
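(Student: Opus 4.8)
The plan is to realise each of the three hypotheses as a local $\mathbb{C}$-linear map $\theta(\{x_k\})=\{\theta_k(x_k)\}$ between suitable Banach modules over one of three base spaces, and then quote the results of Sections \ref{s:lemmas} and \ref{s:local}. In every case $\theta$ acts coordinatewise, so $\{x_k\}\cdot\varphi=0$ forces $x_k\varphi(k)=0$, hence $\theta_k(x_k)\varphi(k)=0$, for every $k$; thus $\theta$ is local and, by Remark \ref{rem:loc>good}, $\sigma=\id$ satisfies the hypotheses of all the Section \ref{s:lemmas} results. For (a), take $\Omega=\mathbb{N}$ (discrete) and $E:=c_0(X)$, $F:=c_0(Y)$, which are essential Banach $C_0(\mathbb{N})$-modules; here $K_n^E=\{\{x_k\}:x_n=0\}$, so the fibre $\Xi^E_n=E/K_n^E$ equals $X$ (and $\Xi^F_n=Y$) with quotient norm $\|x_n\|$, and $\ti\theta(\{x_k\})(n)=\theta_n(x_n)$. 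Evaluating over the unit ball (supported at $n$) gives the key identity $\sup_{\|e\|\le1}\|\ti\theta(e)(n)\|=\|\theta_n\|$, so $\FU_\theta=\{n:\|\theta_n\|=\infty\}$. By Lemma \ref{lem:C_0-lin}(a) this set is finite and the supremum off it is finite; any $n_0$ larger than all its elements yields $\sup_{n\ge n_0}\|\theta_n\|<\infty$.

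The convergence hypothesis of (b) forces a compact base, so take $\Omega=\mathbb{N}_\infty$ and $E:=c(X)$, $F:=c(Y)$, the spaces of convergent $X$- and $Y$-valued sequences, now essential unital $C(\mathbb{N}_\infty)$-modules. The fibre over each $n\in\mathbb{N}$ is again $X$ (resp. $Y$), while the fibre over the non-isolated point $\infty$ is $X$ (resp. $Y$) via the limit, and the induced fibre map there is exactly $\theta_\infty$, since $\ti\theta(\{x\})(\infty)=\lim_k\theta_k(x)=\theta_\infty(x)$; the hypothesis of (b) is what guarantees $\theta$ lands in $c(Y)$. As $F$ is $C(\mathbb{N}_\infty)$-normed via $|\{y_k\}|=\{\|y_k\|\}$, Theorem \ref{thm:loc>mod}(b) applies and produces a decomposition whose exceptional finite set $T$ consists of isolated points; hence $\infty\notin T$, the fibre map $\theta_\infty$ is a restriction of the bounded summand $\theta_0$ and is therefore bounded, while the finiteness of $T\cap\mathbb{N}$ yields an $n_0$ with $\sup_{n\ge n_0}\|\theta_n\|<\infty$.

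For (c), take $\Omega=\beta\mathbb{N}$ and $E:=\ell^\infty(X)$, $F:=\ell^\infty(Y)$, essential $C(\beta\mathbb{N})$-modules whose fibre over an ultrafilter $\CF$ is the ultrapower $E/K^E_\CF$, with quotient norm $\lim_\CF\|x_k\|$. The second hypothesis of (c) says precisely that $\theta(K^E_\CF)\subseteq K^F_\CF$ for every $\CF$; since $I^E_\CF\subseteq K^E_\CF$, this in particular gives the hypothesis $\theta(I^E_{\sigma(\nu)})\subseteq K^F_\nu$ of Lemma \ref{lem:C_0-lin} with $\sigma=\id$ and, moreover, makes every fibre map $\theta_\CF:\Xi^E_\CF\to\Xi^F_\CF$ well defined. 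As in (a), the quotient-norm computation gives $\sup_{\|e\|\le1}\|\ti\theta(e)(\CF)\|=\|\theta_\CF\|$, so Lemma \ref{lem:C_0-lin}(a) shows that $\FU_\theta=\{\CF:\|\theta_\CF\|=\infty\}$ is a finite set $\{\CF_1,\dots,\CF_n\}$ with $\sup_{\CF\ne\CF_1,\dots,\CF_n}\|\theta_\CF\|<\infty$. Since the principal ultrafilter at $n$ recovers $\theta_n$ and only finitely many of these can lie among $\CF_1,\dots,\CF_n$, the final assertion $\sup_{n\ge n_0}\|\theta_n\|<\infty$ follows.

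The step I expect to demand the most care is the bookkeeping that identifies the abstractly defined fibres and fibre norms of $\Xi^E$ and $\Xi^F$ with the concrete data $X$, $Y$, $\theta_k$ (and with the ultrapowers in (c)), together with verifying that $c_0(X)$, $c(X)$, $\ell^\infty(X)$ are genuinely essential modules and that $c(Y)$, $\ell^\infty(Y)$ are $C_0(\Omega)$-normed---the latter via the continuity of $\CF\mapsto\lim_\CF\|y_k\|$ on $\beta\mathbb{N}$. Once these identifications are in place, the three conclusions are immediate consequences of Lemma \ref{lem:C_0-lin}(a) and Theorem \ref{thm:loc>mod}(b).
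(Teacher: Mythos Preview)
Your proposal is correct and follows essentially the same route as the paper: in each of (a), (b), (c) you realise $\theta$ as a local (indeed coordinatewise) $\mathbb{C}$-linear map between essential Banach modules over $\mathbb{N}$, $\mathbb{N}_\infty$, and $\beta\mathbb{N}$ respectively, and then invoke the structural results of Sections~\ref{s:lemmas}--\ref{s:local}. The paper's proof is terser---it simply names the modules $c_0(X)$, $C(\mathbb{N}_\infty,X)$, $\ell^\infty(X)$ and appeals to Theorem~\ref{thm:loc>mod} in all three cases---whereas you unpack the fibre identifications and in (a) and (c) quote Lemma~\ref{lem:C_0-lin}(a) directly rather than the packaged theorem; but the underlying argument is identical.
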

\begin{prf}
(a) Let $E = c_0(X)$ and $F = c_0(Y)$. Then $\theta$ is a
$C_0(\mathbb{N})$-module map and we can apply Theorem
\ref{thm:loc>mod}.

\smnoind (b) Let $E = C(\mathbb{N}_\infty, X)$ and $F =
C(\mathbb{N}_\infty, Y)$. Then $\theta\oplus \theta_\infty$ is a
well defined $C(\mathbb{N}_\infty)$-module map from $E$ into $F$ and
Theorem \ref{thm:loc>mod} implies this part.

\smnoind (c) Let $E = \ell^\infty(X)$ and $F = \ell^\infty(Y)$. Then
$E$ and $F$ are unital Banach $C(\beta\mathbb{N})$-modules. For any
ultrafilter $\CF\in \beta\mathbb{N}$, one has
$$K^E_\CF\ =\ \{ (x_n)\in E: \lim_\CF x_n = 0 \} \quad {\rm and}\quad K^F_\CF\ =\ \{ (y_n)\in F: \lim_\CF y_n = 0 \}.$$
The first hypothesis shows that $\theta(E) \subseteq F$ and the
second one tells us that $\theta(K^E_\CF) \subseteq K^F_\CF$.
On the other hand, if $n\in \mathbb{N}$ and $\CF_n := \{U\subseteq
\mathbb{N}: n\in U\}$, then
$$K^E_{\CF_n}\ =\ \{(x_k)\in \ell^\infty(X): x_n = 0\}$$
and so, $\theta_{\CF_n} = \theta_n$. Now, this part follows
from Theorem \ref{thm:loc>mod}.
\end{prf}

\medskip

\begin{rem}
Note that if $\CF$ is a free ultrafilter on $\mathbb{N}$, then
$\Xi^{\ell^\infty(X)}_\CF$ and $\Xi^{\ell^\infty(Y)}_\CF$ can be identified with the ultrapowers $X^\CF$ and $Y^\CF$ of $X$ and $Y$ (over $\CF$) respectively. One can
interpret Proposition \ref{prop:seq-maps}(c) as follows:
\begin{quote}
If the sequence $\{\theta_n\}$ as in Proposition \ref{prop:seq-maps} induces canonically a map $\theta:\ell^\infty(X)\rightarrow \ell^\infty(Y)$
as well as a map $\theta_\CF: X^\CF\rightarrow Y^\CF$ for every free ultrafilter
$\CF$ (none of them assumed to be bounded), then for all but a finite number of ultrafilters $\CF$, the map $\theta_\CF$ is bounded and
they have a common bound.
\end{quote}
It can be shown easily that the converse of the above is also true (but we left it to the readers to check the details):
\begin{quotation}
If the sequence $\{\theta_n\}$ is as in Proposition \ref{prop:seq-maps} and there exists $n_0\in \mathbb{N}$ with $\sup_{n\geq n_0} \|\theta_n\| < \infty$, then $\{\theta_n\}$ induces canonically a map from $\ell^\infty(X)$ to
$\ell^\infty(Y)$ as well as a map from $X^\CF$ to $Y^\CF$ for every free ultrafilter $\CF$.
\end{quotation}
\end{rem}

\medskip

Another important point in Theorem \ref{thm:loc>mod} is the
automatic $C_0(\Omega)$-linearity. In fact, it can be shown that for
every $C^*$-algebra $A$, any bounded local linear map from a Banach right
$A$-module into a Hilbert $A$-module is automatically $A$-linear
(see Proposition \ref{prop:local+bounded=modulemaps} in the Appendix).
Theorem \ref{thm:loc>mod} tells us that if $A$ is commutative, then one can relax the assumption of the range space to a Banach $A$-convex module and one can remove the boundedness assumption.
Another application of this theorem is that if $A$ is a finite dimensional
$C^*$-algebras, then every local linear map between any two Banach right
$A$-modules is $A$-linear.

\medskip

\begin{cor}
\label{cor:local>mod} Let $A$ be a finite dimensional $C^*$-algebra.
Suppose that $E$ and $F$ are unital Banach right $A$-modules. If
$\theta: E\rightarrow F$ is a local $\mathbb{C}$-linear map in the
sense of Proposition \ref{prop:local+bounded=modulemaps} (not
assumed to be bounded), then $\theta$ is an $A$-module map.
\end{cor}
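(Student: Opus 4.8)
The plan is to reduce this noncommutative statement to the commutative Theorem \ref{thm:loc>mod} by restricting the module action to maximal abelian $*$-subalgebras of $A$, and then to recover full $A$-linearity from linearity over these subalgebras by means of the spectral theorem. Write $A = \bigoplus_{i=1}^{k} M_{n_i}(\mathbb{C})$ and fix, to begin with, an arbitrary maximal abelian $*$-subalgebra $D\subseteq A$. Such a $D$ is finite dimensional and abelian, hence $D\cong \mathbb{C}^m \cong C(S)$ for a finite discrete set $S$ (with $m=\sum_i n_i$), and $1_A\in D$. Restricting the $A$-action makes $E$ and $F$ unital, in particular essential, Banach $C(S)$-modules. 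Since $\theta$ is local over $A$ (that is, $e\cdot a = 0$ forces $\theta(e)\cdot a = 0$) and $D\subseteq A$, the map $\theta$ is in particular local in the $C(S)$-sense of Section \ref{s:local}.

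With this setup, Theorem \ref{thm:loc>mod}(a) tells us that the induced map $\ti\theta$ is a $C(S)$-module map, i.e. $\widetilde{\theta(x\cdot d)} = \widetilde{\theta(x)\cdot d}$ for all $x\in E$ and $d\in D$. Because $S$ is finite, hence a countable compact Hausdorff space, Remark \ref{count>inj}(c) shows that $\sim$ is injective on $F$; cancelling $\sim$ yields $\theta(x\cdot d) = \theta(x)\cdot d$, so that $\theta$ itself is $C(S)$-linear. Now I let $D$ range over all maximal abelian $*$-subalgebras of $A$. By the spectral theorem, every self-adjoint $a\in A$ is block-diagonalizable and therefore lies in some such $D$; the previous paragraph then gives $\theta(x\cdot a) = \theta(x)\cdot a$ for every self-adjoint $a\in A$ and every $x\in E$. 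Finally, for an arbitrary $a\in A$ I write $a = a_1 + i a_2$ with $a_1=(a+a^*)/2$ and $a_2=(a-a^*)/(2i)$ self-adjoint; using the $\mathbb{C}$-linearity of $\theta$ together with the compatibility of the scalar and module actions, $\theta(x\cdot a) = \theta(x\cdot a_1) + i\,\theta(x\cdot a_2) = \theta(x)\cdot a_1 + i\,\theta(x)\cdot a_2 = \theta(x)\cdot a$, so $\theta$ is an $A$-module map. (One could instead first split off the central blocks $z_i=1_{M_{n_i}}$ by applying the commutative theorem to the center $Z(A)\cong\mathbb{C}^k$ and then treat each $M_{n_i}$ separately, but the maximal-abelian argument handles all blocks at once.)

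The step I expect to be most delicate is the passage from ``$\ti\theta$ is a $C(S)$-module map'' back to the genuine $C(S)$-linearity of $\theta$ over the finite base space $S$: this rests on the injectivity of $\sim$, which here is exactly what Remark \ref{count>inj}(c) supplies, and on the resulting cancellation of $\sim$ in the module-map identity. One should also confirm that the Appendix's notion of locality over $A$ indeed restricts to $C(S)$-locality on each maximal abelian subalgebra; granting the natural definition $e\cdot a=0\Rightarrow\theta(e)\cdot a=0$, this is immediate since $D\subseteq A$. The remaining ingredient, that self-adjoint elements of a finite dimensional $C^*$-algebra exhaust the union of its maximal abelian $*$-subalgebras, is a direct consequence of simultaneous diagonalizability within each matrix block.
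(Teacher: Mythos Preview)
Your proof is correct and follows essentially the same approach as the paper: reduce to the commutative case via an abelian $*$-subalgebra with finite spectrum, apply Theorem \ref{thm:loc>mod}(a) together with the injectivity of $\sim$ from Remark \ref{count>inj}(c) to get genuine linearity over that subalgebra, and then recover full $A$-linearity by writing a general element as a combination of self-adjoints. The only cosmetic difference is that the paper works with $A_a := C^*(a,1)$ for each self-adjoint $a$ rather than with maximal abelian $*$-subalgebras; since $C^*(a,1)$ sits inside some maximal abelian $D$, the two routes are interchangeable.
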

\begin{prf}
Pick any $x\in E$ and $a\in A_{sa}$. Let $A_a := C^*(a,1)$. By
Remark \ref{count>inj}(c), both $E$ and $F$ are unital Banach
$A_a$-convex modules. Thus, Theorem \ref{thm:loc>mod} tell us that
$\theta$ is a $A_a$-module map. In particular, $\theta(xa) =
\theta(x)a$.
\end{prf}

\medskip

\begin{rem}
(a) Suppose that $A$ is a unital $C^*$-algebra and $F$ is a unital
Banach right $A$-convex module in the sense $\|x a + y (1-a)\| \leq
\max\{\|x\|, \|y\|\}$ for $x,y\in F$ and $a\in A_+$ with $a\leq
1$.
Then, by the argument of Corollary \ref{cor:local>mod}, all
local linear maps from any unital Banach right $A$-module into $F$ are
automatically $A$-linear.

\smnoind (b) If one can show that for every compact subset
$\Omega\subseteq \mathbb{R}$ and every essential Banach
$C(\Omega)$-module $F$, the map $\sim: F\rightarrow \ti F$ is
injective, then using the argument of Corollary \ref{cor:local>mod},
one can show that for each $C^*$-algebra $A$, all local linear maps between
any two Banach right $A$-modules are $A$-module maps (without
assuming that $\theta$ is bounded).
However, we do not know if it is true.
\end{rem}

\bigskip

\section{Applications to separating mappings}\label{s:separating}

\medskip

In this section, we consider $\Omega$ and $\Delta$ to be possibly
different spaces.
In this case, one cannot define local property any
more, but one has a weaker natural property called separating. More
precisely, $\theta$ is said to be \emph{separating} if
$$
|\ti\theta(e)| \cdot |\ti \theta(g)| = 0, \quad\text{whenever $e,g\in E$ satisfying
$|\ti e| \cdot |\ti g| = 0$}.
$$
In the case when $E = C_0(\Omega)$ and $F=C_0(\Delta)$, this coincides with the well-known notion of disjointness preserving (see e.g. \cite{Ab83, Ar83, Pa84, Ja90, FH94,JW96}).

\medskip

\begin{lem}
\label{lem-diff-base} If $\theta$ is separating, there is a
continuous map $\sigma: \Delta_\theta \rightarrow \Omega_\infty$
such that $\theta(I_{\sigma(\nu)}^E)\subseteq I_\nu^F$ ($\nu\in
\Delta_\theta$).
\end{lem}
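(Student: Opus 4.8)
The plan is to construct the map $\sigma$ point by point, using the separating hypothesis to produce, for each $\nu \in \Delta_\theta$, a distinguished point $\sigma(\nu) \in \Omega_\infty$, and then to verify the containment $\theta(I^E_{\sigma(\nu)}) \subseteq I^F_\nu$ together with continuity. The natural candidate for $\sigma(\nu)$ is a point where the ``local support'' of $\theta$ at $\nu$ concentrates. Concretely, for a fixed $\nu \in \Delta_\theta$ I would look at the family of open sets $U \subseteq \Omega$ for which there exists $e \in K^E_{\Omega \setminus U}$ (i.e.\ $e$ supported in $U$) with $\ti\theta(e)(\nu) \neq 0$, and define $\sigma(\nu)$ to be the single point that lies in the closure of every such $U$. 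The separating condition is exactly what forces these supports to shrink to one point: if two elements $e,g$ have disjoint supports then $|\ti\theta(e)| \cdot |\ti\theta(g)| = 0$, so they cannot both be nonzero at $\nu$.

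\textbf{Existence and uniqueness of $\sigma(\nu)$.} First I would show that the intersection $\bigcap \overline{U}$ (over all such ``$\nu$-active'' open sets $U$, taken in $\Omega_\infty$) is nonempty; this uses compactness of $\Omega_\infty$ and the finite intersection property, where finite intersections are guaranteed to be $\nu$-active because disjointly supported sections would violate separatingness. Since $\nu \in \Delta_\theta$ means $\ti\theta(e)(\nu) \neq 0$ for some $e \in E$, the collection of $\nu$-active sets is nonempty, so the argument gets off the ground. Next I would show the intersection is a \emph{single} point: if $\omega_1 \neq \omega_2$ both lay in it, I could separate them by disjoint open neighbourhoods $U_1, U_2$; any section can be split (via a partition of unity from $\CU_{\Omega_\infty}$) into pieces supported in $U_1$, $U_2$, and their complement, and separatingness would again yield a contradiction with both $U_1$ and $U_2$ being $\nu$-active. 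This defines $\sigma: \Delta_\theta \to \Omega_\infty$ unambiguously.

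\textbf{The containment and continuity.} For the containment $\theta(I^E_{\sigma(\nu)}) \subseteq I^F_\nu$, I would take $e \in I^E_{\sigma(\nu)}$, so $e \in K^E_V$ for some $V \in \CN_{\Omega_\infty}(\sigma(\nu))$; that is, $e$ is (approximately) supported away from a neighbourhood of $\sigma(\nu)$. By the definition of $\sigma(\nu)$ as the unique common point of active supports, any section supported off a neighbourhood of $\sigma(\nu)$ must have $\ti\theta(e)$ vanishing near $\nu$, giving $\theta(e) \in I^F_\nu$; the approximation in Remark \ref{count>inj}(b) handles the passage from $K^E_V$ to a genuine element. For continuity of $\sigma$, the cleanest route is to invoke Lemma \ref{lem:cont-sigma}: once I have established $\theta(I^E_{\sigma(\nu)}) \subseteq K^F_\nu$ (which follows from $I^F_\nu \supseteq K^F_\nu$ only after noting $I^F_\nu = \bigcup_V K^F_V$, so I actually need the stronger $\theta(I^E_{\sigma(\nu)}) \subseteq K^F_\nu$; this holds since $I^F_\nu \subseteq K^F_\nu$ is false in general, so I will instead verify the hypothesis of Lemma \ref{lem:cont-sigma} directly by checking $\theta(I^E_{\sigma(\nu)}) \subseteq K^F_\nu$ using that $I^E_{\sigma(\nu)} = \bigcup K^E_V$ and each such piece maps into $K^F_\nu$), Lemma \ref{lem:cont-sigma} delivers continuity for free.

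\textbf{Main obstacle.} The hardest part will be the uniqueness argument showing the active supports collapse to a single point, and in particular handling the bookkeeping between the weaker hypothesis $|\ti\theta(e)|\cdot|\ti\theta(g)|=0$ (a pointwise-on-$\Delta$ condition) and the pointwise statement ``$\ti\theta(e)(\nu) \neq 0$''. The subtlety is that separatingness gives vanishing of the \emph{product} of the norm-functions $|\ti\theta(e)|$ and $|\ti\theta(g)|$ over all of $\Delta$, so at the fixed point $\nu$ at most one of $\ti\theta(e)(\nu)$, $\ti\theta(g)(\nu)$ can be nonzero — this is precisely the lever for uniqueness, but one must argue carefully that splitting an arbitrary $e$ into finitely many disjointly-supported pieces (via partitions of unity) is compatible with the module action and the definition of $\ti\theta$, and that the resulting finite covering argument closes up inside the compact space $\Omega_\infty$.
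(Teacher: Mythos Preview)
Your support-map strategy is a valid dual to the paper's. The paper works from the other side: it sets $S_\nu := \{\omega\in\Omega_\infty : \theta(I^E_\omega)\subseteq I^F_\nu\}$ and shows this is a singleton. Non-emptiness goes by contradiction: if $S_\nu=\emptyset$, cover $\Omega_\infty$ by neighbourhoods $U_\omega$ carrying witnesses $e_\omega\in K^E_{U_\omega}$ with $\theta(e_\omega)\notin I^F_\nu$, pass to a finite subcover with partition of unity $\{\varphi_i\}$, and use the separating hypothesis together with Remark~\ref{count>inj}(a) to force $\ti\theta(g\varphi_i)(\nu)=0$ for every $g$, contradicting $\nu\in\Delta_\theta$. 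Uniqueness is your partition-of-unity split $e=e\varphi+e(1-\varphi)$ between two candidate points. The gain of the paper's packaging is that $\theta(I^E_{\sigma(\nu)})\subseteq I^F_\nu$ is then the \emph{definition} of $\sigma(\nu)\in S_\nu$, so no separate verification of the containment is needed; continuity then comes from Lemma~\ref{lem:cont-sigma}, as you say.

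Your third paragraph has a genuine confusion that you should straighten out. The inclusion $I^F_\nu\subseteq K^F_\nu$ is \emph{always} true (each $K^F_V\subseteq K^F_\nu$ when $V\in\CN_\Delta(\nu)$), not false as you write; so the hypothesis of Lemma~\ref{lem:cont-sigma} follows immediately from the lemma's conclusion and your detour is unnecessary. The real issue is the step before: because your notion of ``$\nu$-active'' is purely pointwise (it refers only to $\ti\theta(e)(\nu)\neq 0$), what your construction actually yields is $\ti\theta(e)(\nu)=0$ for $e\in I^E_{\sigma(\nu)}$, i.e.\ containment in $K^F_\nu$, and your assertion that $\ti\theta(e)$ vanishes ``\emph{near}~$\nu$'' is not justified by a pointwise definition. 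That weaker containment is enough for Lemma~\ref{lem:cont-sigma} and hence for continuity, but not for the lemma as stated with $I^F_\nu$. Either rebuild ``$\nu$-active'' around the condition $\theta(e)\notin I^F_\nu$ (which amounts to the paper's $S_\nu$), or add a second pass: choose $g$ with $\ti\theta(g)(\nu)\neq 0$, cut it down to $g\lambda$ supported inside ${\rm Int}_\Omega(V)$ so that $|\widetilde{g\lambda}|\cdot|\ti e|=0$, apply separating, and use Remark~\ref{count>inj}(a) to obtain an open neighbourhood of $\nu$ on which $\ti\theta(g\lambda)\neq 0$ and hence $\ti\theta(e)=0$. (Minor: your uniqueness sketch is phrased backwards---the contradiction is that \emph{no} piece in the partition-of-unity decomposition can be $\nu$-active, whence $\nu\in\FZ_\theta$---but that is easy to repair.)
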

\begin{prf}
Set
$$S_\nu\ :=\ \{\omega\in \Omega_\infty: \theta(I_\omega^E) \subseteq I_\nu^F\} \quad (\nu\in \Delta_\theta).$$
Suppose there is $\nu\in \Delta_\theta$ with $S_\nu =
\emptyset$.
Then for each $\omega\in \Omega_\infty$, there exist
$U_\omega\in \CN_{\Omega_\infty}(\omega)$ and $e_\omega\in K^E_{U_\omega}$ with $\theta(e_\omega) \notin
I_\nu^F$. Let $\{U_{\omega_i}\}_{i=1}^n$ be a finite subcover of
$\{U_\omega\}_{\omega\in \Omega_\infty}$ and $\{\varphi_i\}_{i=1}^n$
be a partition of unity subordinate to $\{U_{\omega_i}\}_{i=1}^n$.
Take any $g\in E$.
From $|\widetilde{g\varphi_i}|
|\widetilde{e_{\omega_i}}| = 0$, we obtain $|\ti \theta(g\varphi_i)|
|\ti \theta(e_{\omega_i})| = 0$, which implies that $\ti
\theta(g\varphi_i)(\nu) = 0$ (because of Remark \ref{count>inj}(a)
and the fact that $\theta(e_{\omega_i}) \notin I_\nu^F$).
Consequently,
$$\ti\theta(g)(\nu)\ =\ \sum_{i=1}^n \ti \theta(g\varphi_i)(\nu)\ =\ 0,$$
and we arrive in the contradiction that $\nu\in \FZ_\theta$.
Suppose there is $\nu\in \Delta_\theta$ with $S_\nu$ containing two
distinct points $\omega_1$ and $\omega_2$.
Let $U,V\in \CN_{\Omega_\infty}(\omega_1)$ with $V\subseteq {\rm
Int}_{\Omega_\infty}(U)$ and $\omega_2\notin U$.
For any $\varphi\in \CU_{\Omega_\infty}(V,U)$ and $e\in E$, we have
$e(1-\varphi)\in I_{\omega_1}^E$ and $e \varphi \in I_{\omega_2}^E$
which implies that
$$\theta(e)\ =\ \theta(e(1-\varphi)) + \theta(e\varphi)\ \in\ I_\nu^F.$$
This gives the contradiction that $\nu\in \FZ_\theta$. Therefore, we
can define $\sigma(\nu)$ to be the only point in $S_\nu$, and it is
clear that $\theta\left(I_{\sigma(\nu)}^E\right) \subseteq I_\nu^F$.
Now, the
continuity of $\sigma$ follows from Lemma \ref{lem:cont-sigma}.
\end{prf}

\medskip

\begin{cor}
\label{cor:bundle} Let $\Xi$ be an (H)-Banach bundle over $\Omega$,
let $\Lambda$ be an (F)-Banach bundle over $\Delta$, and let $\rho:
\Xi\rightarrow \Lambda$ be a map (not assumed to be bounded nor
continuous). Suppose that $\sigma: \Delta\rightarrow \Omega$ is an
injection sending isolated points in $\Delta$ to isolated points
in $\Omega$ such that $e \mapsto \rho\circ e\circ \sigma$ defines a
linear map $\theta:\Gamma_{0}(\Xi)\rightarrow \Gamma_{0}(\Lambda)$.
Then there exists a finite set $T$ consisting of isolated points of
$\Delta$ such that the restriction of $\rho$ induces a bounded
Banach bundle map $\rho_0: \Xi_{\Omega\setminus \sigma(T)} \rightarrow
\Lambda_{\Delta\setminus T}$ (covering $\sigma|_{\Delta\setminus T}$).
Moreover, $\sigma$ is continuous on
$\Delta\setminus \FZ_{\rho,\sigma}$ where $\FZ_{\rho,\sigma} :=
\{\nu\in \Delta: \rho(e(\sigma(\nu))) = 0 {\rm\ for\ all\ } e\in E\}$.
\end{cor}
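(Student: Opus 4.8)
The plan is to pass to section spaces and apply the module-theoretic results of Sections~\ref{s:lemmas}--\ref{s:local}, and then to repackage the resulting bounded module map as a Banach bundle map. Set $E := \Gamma_0(\Xi)$ and $F := \Gamma_0(\Lambda)$. Then $E$ is an essential Banach $C_0(\Omega)$-module and, since $\Lambda$ is an (F)-Banach bundle, $F$ is an essential Banach $C_0(\Delta)$-normed module; in particular we may identify $F = \ti F$ and $\ti\theta = \theta$. The hypothesis says precisely that $\theta(e) := \rho\circ e\circ\sigma$ (so that $\theta(e)(\nu) = \rho(e(\sigma(\nu)))$) defines a linear map $\theta : E\rightarrow F$. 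Before anything else I would record that each restriction of $\rho$ to a fibre $\Xi_{\sigma(\nu)}$ is linear: as $\Xi$ has enough continuous sections, every vector in $\Xi_{\sigma(\nu)}$ is of the form $e(\sigma(\nu))$ for some $e\in E$, and additivity and homogeneity of $\rho$ on this fibre then drop out of the linearity of $\theta$ by evaluating $\theta(e_1 + e_2)(\nu)$ and $\theta(e_1\lambda)(\nu)$. In particular $\rho(0_{\sigma(\nu)}) = 0$ for every $\nu$.

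Next I would verify the standing hypothesis $\theta(I^E_{\sigma(\nu)})\subseteq K^F_\nu$ of Section~\ref{s:lemmas} and extract continuity of $\sigma$. Since $\sigma(\nu)\in V$ for each $V\in\CN_{\Omega_\infty}(\sigma(\nu))$, one has $K^E_V\subseteq K^E_{\sigma(\nu)}$ and hence $I^E_{\sigma(\nu)}\subseteq K^E_{\sigma(\nu)}$; thus every $e\in I^E_{\sigma(\nu)}$ satisfies $e(\sigma(\nu)) = 0_{\sigma(\nu)}$, so $\theta(e)(\nu) = \rho(0_{\sigma(\nu)}) = 0$, i.e.\ $\theta(e)\in K^F_\nu$ (no continuity of $\theta$ is needed here). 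Because $\theta(e)(\nu) = \rho(e(\sigma(\nu)))$, the set $\FZ_{\rho,\sigma}$ is exactly $\FZ_\theta$, so $\Delta\setminus\FZ_{\rho,\sigma} = \Delta_\theta$; Lemma~\ref{lem:cont-sigma} then gives the continuity of $\sigma$ on $\Delta_\theta$ at once, which is the ``Moreover'' assertion.

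For the finite exceptional set and the boundedness I would invoke Theorem~\ref{thm:decomp}(b). Its hypotheses now hold: $\sigma$ is injective, and by Remark~\ref{rem:remove-FV} the isolated points of $\Delta_\theta$ coincide with those of $\Delta$, so $\sigma$ sends isolated points of $\Delta_\theta$ to isolated points of $\Omega$. The theorem produces a finite set $T$ of isolated points of $\Delta$ with $\FN_{\theta,\sigma} = \emptyset$, a bounded module map $\theta_0 : E_{\Omega\setminus\sigma(T)}\rightarrow F_{\Delta\setminus T}$ satisfying $\theta_0(e\cdot\varphi) = \theta_0(e)\cdot\varphi\circ\sigma$, and the decomposition $\theta = \theta_0\oplus\bigoplus_{\nu\in T}\theta_\nu$.

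It remains to translate $\theta_0$ back into $\rho_0$. Using that $E$ is $C_0(\Omega)$-convex one identifies $E_{\Omega\setminus\sigma(T)} = \Gamma_0(\Xi_{\Omega\setminus\sigma(T)})$ and $F_{\Delta\setminus T} = \Gamma_0(\Lambda_{\Delta\setminus T})$, and takes $\rho_0$ to be $\rho$ on the fibres over $\sigma(\Delta\setminus T)$, so that $\theta_0(e)(\nu) = \rho_0(e(\sigma(\nu)))$. Boundedness of $\rho_0$ by $\|\theta_0\|$ is routine: given $\xi\in\Xi_{\sigma(\nu)}$ with $\nu\notin T$, choose $e\in E_{\Omega\setminus\sigma(T)}$ with $e(\sigma(\nu)) = \xi$ and $\|e\|\leq\|\xi\| + \epsilon$ (possible since $\sigma(T)$ is a finite set of isolated points missing $\sigma(\nu)$), whence $\|\rho_0(\xi)\| = \|\theta_0(e)(\nu)\|\leq\|\theta_0\|\,(\|\xi\| + \epsilon)$, and let $\epsilon\to 0$. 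The genuine difficulty, and the step I expect to be the main obstacle, is the joint continuity of $\rho_0$ as a Banach bundle map: because the two bundles sit over different base spaces, checking $\rho_0(\xi_i)\to\rho_0(\xi_0)$ for a convergent net forces one to transport convergence through $\sigma$, and a priori we control only the continuity of $\sigma$, not of $\sigma^{-1}$. I would resolve this via the standard correspondence recalled in Section~\ref{s:prelim} (in its ``$\sigma$-twisted'' form) between bounded $C_0$-module maps satisfying $\theta_0(e\cdot\varphi) = \theta_0(e)\cdot\varphi\circ\sigma$ and bounded Banach bundle maps covering $\sigma|_{\Delta\setminus T}$: this manufactures a continuous $\rho_0$ directly from $\theta_0$, and the fibrewise linearity and boundedness already established force it to agree with $\rho$ on section values, completing the identification.
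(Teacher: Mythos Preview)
Your proof is correct and follows essentially the same route as the paper: both reduce to Theorem~\ref{thm:decomp} after checking that the given $\sigma$ satisfies $\theta(I^E_{\sigma(\nu)})\subseteq K^F_\nu$, and your translation from $\theta_0$ back to $\rho_0$ merely fills in details the paper leaves implicit. The only visible difference concerns the continuity of $\sigma$: the paper observes that $\theta$ is separating and cites Lemma~\ref{lem-diff-base}, whereas you apply Lemma~\ref{lem:cont-sigma} directly to the given $\sigma$; since the proof of Lemma~\ref{lem-diff-base} itself ends by invoking Lemma~\ref{lem:cont-sigma}, your path is a shortcut rather than a genuinely different argument.
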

\begin{prf}
The first conclusion follows from Theorem \ref{thm:decomp}. To see
the second conclusion, we note that $\theta$ is separating and we
can apply Lemma \ref{lem-diff-base} (observe that $\FZ_{\rho,\sigma} =
\FZ_\theta$).
\end{prf}

\medskip

\begin{thm}\label{thm:biseparating}
Let $\Omega$ and $\Delta$ be two locally compact Hausdorff spaces,
and let $E$ be a full essential Banach
$C_0(\Omega)$-module (see Remark \ref{rem:FV}(b)) and $F$ be a full essential Banach
$C_0(\Delta)$-normed module. Suppose that $\theta: E\rightarrow F$
is a bijective $\mathbb{C}$-linear map (not assumed to be bounded)
such that it is \emph{biseparating} in the sense that both $\theta$
and $\theta^{-1}$ are separating.

\smnoind (a) There exists a homeomorphism $\sigma: \Delta
\rightarrow \Omega$ satisfying
$$\theta(e\cdot \varphi)\ =\ \theta(e)
\cdot \varphi\circ \sigma \qquad (e\in E; \varphi\in C_0(\Omega)).$$

\smnoind (b) There exists isolated points $\nu_1,...,\nu_n\in
\Delta$ such that the restriction of $\theta$ induces a Banach space
isomorphism $\theta_0: E_{\Omega_\theta} \rightarrow
F_{\Delta_\theta}$, where $\Delta_\theta := \Delta\setminus
\{\nu_1,...,\nu_n\}$ and $\Omega_\theta := \sigma(\Delta_\theta)$.
\end{thm}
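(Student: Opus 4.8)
The plan is to attach to $\theta$ and to $\theta^{-1}$ their ``coordinate maps'' and then play them against each other. Bijectivity forces both $\FZ_\theta$ and $\FZ_{\theta^{-1}}$ to be empty: given $\nu\in\Delta$, fullness of $F$ yields $f$ with $\ti f(\nu)\neq 0$, and writing $f=\theta(e)$ gives $\ti\theta(e)(\nu)\neq 0$; symmetrically one uses fullness of $E$ and surjectivity of $\theta^{-1}$. Hence $\Delta_\theta=\Delta$, and Lemma~\ref{lem-diff-base} applied to $\theta$ and to $\theta^{-1}$ produces \emph{continuous} maps $\sigma\colon\Delta\to\Omega_\infty$ and $\tau\colon\Omega\to\Delta_\infty$ with $\theta(I^E_{\sigma(\nu)})\subseteq I^F_\nu$ and $\theta^{-1}(I^F_{\tau(\omega)})\subseteq I^E_\omega$. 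The elementary observation that, for a full module, $I^E_{\omega_1}\subseteq I^E_{\omega_2}$ with $\omega_1,\omega_2\in\Omega$ forces $\omega_1=\omega_2$ (and that $I^E_\infty$, the compactly supported part, is never contained in any $I^E_{\omega_2}$) then shows, via injectivity of $\theta,\theta^{-1}$, that $\tau(\omega)\in\Delta$ implies $\sigma(\tau(\omega))=\omega$ and $\sigma(\nu)\in\Omega$ implies $\tau(\sigma(\nu))=\nu$. Thus $\sigma$ is injective on $\Delta':=\sigma^{-1}(\Omega)$, $\tau=\sigma^{-1}$ there, and $\sigma(\Delta)=\tau^{-1}(\Delta)$ is \emph{open} in $\Omega$.

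\textbf{The hard point is to show $\sigma$ never takes the value $\infty$, i.e. $\Delta'=\Delta$.} Here I would combine four facts. First, because $E$ is essential one has $K^E_\infty=E$, so if $\sigma(\nu)=\infty$ and $\nu\notin\FU_\theta$ then Lemma~\ref{lem:C_0-lin}(b) would give $\theta(E)=\theta(K^E_\infty)\subseteq K^F_\nu$, i.e. $\nu\in\FZ_\theta=\emptyset$; hence $\sigma^{-1}(\infty)\subseteq\FU_\theta$. Second, $\FU_\theta$ is \emph{open}: the uniform bound $\kappa:=\sup_{\nu\notin\FU_\theta,\ \|e\|\le1}\|\ti\theta(e)(\nu)\|<\infty$ of Lemma~\ref{lem:C_0-lin}(a), together with continuity of $\nu\mapsto\|\ti\theta(e)(\nu)\|$ (valid since $F$ is $C_0(\Delta)$-normed), forbids any $\nu_0\in\FU_\theta$ from being a limit of points outside $\FU_\theta$. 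Third, the argument of Lemma~\ref{lem:C_0-lin}(a) shows $\sigma(\FU_\theta)$ is a finite subset of $\Omega_\infty$, so $\sigma|_{\FU_\theta}$ is locally constant and $W:=\sigma^{-1}(\infty)$ is \emph{clopen} in $\Delta$. Fourth, an approximation argument (as in Remark~\ref{count>inj}(c), using essentiality) gives $\bigcap_{\omega\in\Omega}I^E_\omega=\{0\}$. If now $W\neq\emptyset$, choose $\nu_0\in W$ and, since $F=\Gamma_0(\Xi^F)$ is full, a nonzero $f\in F$ with $\ti f$ compactly supported inside the clopen set $W$ and $\ti f(\nu_0)\neq 0$. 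As every $\nu\in W$ satisfies $\nu\notin\tau(\Omega)$, one checks $f\in I^F_{\tau(\omega)}$ for \emph{every} $\omega\in\Omega$ (using $\tau(\omega)\notin W$ when $\tau(\omega)\in\Delta$, and compact support of $f$ when $\tau(\omega)=\infty$); hence $\theta^{-1}(f)\in\bigcap_\omega I^E_\omega=\{0\}$, forcing $f=0$, a contradiction. Therefore $\sigma(\Delta)\subseteq\Omega$.

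With $\sigma\colon\Delta\to\Omega$ a continuous injection whose image is open, isolated points of $\Delta$ go to isolated points of $\Omega$, so Lemma~\ref{lem:C_0-lin}(c) yields $\ti\theta(e\cdot\varphi)=\ti\theta(e)\cdot\varphi\circ\sigma$, and since $F$ is $C_0(\Delta)$-normed we may write $\ti\theta=\theta$; modulo surjectivity of $\sigma$ this is already the module identity of (a). Theorem~\ref{thm:decomp}(b) applied to $\theta$ then provides a finite set $T=\{\nu_1,\dots,\nu_n\}$ of isolated points, the splittings $E=E_{\Omega\setminus\sigma(T)}\oplus\bigoplus_{\nu\in T}\Xi^E_{\sigma(\nu)}$ and $F=F_{\Delta\setminus T}\oplus\bigoplus_{\nu\in T}\Xi^F_\nu$, and a \emph{bounded} $\theta_0\colon E_{\Omega\setminus\sigma(T)}\to F_{\Delta\setminus T}$ with $\theta=\theta_0\oplus\bigoplus_{\nu\in T}\theta_\nu$. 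The decomposition is block-diagonal and, since $\theta$ is bijective, each block is bijective; in particular $\theta_0$ is a bounded linear bijection, so the open mapping theorem makes it a Banach space isomorphism, giving a constant $c>0$ with $\|\theta(e)\|\ge c\|e\|$ on $E_{\Omega\setminus\sigma(T)}$.

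Finally I would use this lower bound to prove $\sigma$ \emph{onto} (hence a homeomorphism). If some $\omega_*\in\Omega\setminus\sigma(\Delta)$ existed, pick $e\in E_{\Omega\setminus\sigma(T)}$ with $\ti e(\omega_*)\neq 0$ and bumps $\varphi_k\in C_0(\Omega)$ with $\varphi_k(\omega_*)=1$ and supports shrinking to $\{\omega_*\}$; then $\|e\varphi_k\|\ge|\ti e(\omega_*)|>0$, while $\|\theta(e\varphi_k)\|=\sup_\nu|\ti\theta(e)(\nu)|\,\varphi_k(\sigma(\nu))\to 0$, because any $\nu$ contributing has $\sigma(\nu)\to\omega_*\notin\sigma(\Delta)$, which forces $\nu\to\infty$ in $\Delta$ and hence $\ti\theta(e)(\nu)\to 0$. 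This contradicts $\|\theta(e\varphi_k)\|\ge c\|e\varphi_k\|$. Thus $\sigma(\Delta)=\Omega$ and, being a continuous bijection with continuous inverse $\tau$, $\sigma$ is a homeomorphism; this completes (a). For (b) one simply records the data already produced: $T=\{\nu_1,\dots,\nu_n\}$ is the desired finite set of isolated points, and $\theta_0\colon E_{\Omega_\theta}\to F_{\Delta_\theta}$ (with $\Delta_\theta=\Delta\setminus T$, $\Omega_\theta=\sigma(\Delta_\theta)=\Omega\setminus\sigma(T)$) is the required Banach space isomorphism. The main obstacle throughout is the escape-to-infinity phenomenon of the two middle paragraphs; the asymmetry of the hypotheses (only $F$ is assumed $C_0(\Delta)$-normed) is what makes the two directions---$\sigma$ valued in $\Omega$ and $\sigma$ onto---require different arguments, the former topological (clopenness of $\sigma^{-1}(\infty)$) and the latter metric (the lower bound from the open mapping theorem).
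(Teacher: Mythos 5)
Your proof is correct, and it diverges from the paper's at both of the genuinely delicate points, so a comparison is worth recording. To rule out $\sigma^{-1}(\infty)$, the paper first observes that $\ti e = 0$ forces $\ti\theta(e)=0$ (by the separating property) and hence $e=0$ (by injectivity of $\theta$), so the canonical map $\sim$ on $E$ is injective; it then proves $\Delta\setminus\FN_{\theta,\sigma}=\sigma^{-1}(\Omega)$, shows $\sigma^{-1}(\Omega)\cap\FN_{\theta,\sigma}=\emptyset$ by an isolated-point dichotomy played against $\tau$, and finally kills the open set $\FN_{\theta,\sigma}$ with a compactly supported section $f$, concluding $\widetilde{\theta^{-1}(f)}=0$ and hence $f=0$. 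You instead make $W=\sigma^{-1}(\infty)$ clopen (openness of $\FU_\theta$ plus local constancy of $\sigma$ on it) and invoke $\bigcap_{\omega\in\Omega}I_\omega^E=\{0\}$. That identity is true and is the one step you must write out: for $e$ in the intersection, a partition of unity over the compact set $\supp\psi$ gives $e\psi=0$ for every $\psi\in C_c(\Omega)$, and essentiality finishes via an approximate identity taken in $C_c(\Omega)$; but keep $I_\omega^E$ and do not slide to $K_\omega^E$, since $\bigcap_\omega K_\omega^E=\{0\}$ is exactly the injectivity of $\sim$, which the paper leaves open for general essential modules (the paper's own route through the separating property is the cheaper way to get that injectivity here). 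A second small adaptation you elide: Lemma \ref{lem:C_0-lin}(a) is stated for $\sigma$ valued in $\Omega$, so to get finiteness of $\sigma(\FU_\theta)$ as a subset of $\Omega_\infty$ you should rerun its proof inside $\Omega_\infty$ and discard the at most one index whose chosen neighbourhood contains $\infty$. Where your argument genuinely adds value is surjectivity of $\sigma$: the paper's written proof stops at Lemma \ref{lem:C_0-lin}(c) and does not explicitly verify $\tau^{-1}(\infty)=\emptyset$, whereas your open-mapping-theorem step (a bump $\varphi_k$ at $\omega_*\notin\sigma(\Delta)$ keeps $\|e\varphi_k\|$ bounded below, while $\theta(e)\cdot\varphi_k\circ\sigma$ is supported in $\tau\bigl(\supp\varphi_k\cap\sigma(\Delta)\bigr)$, which escapes to infinity in $\Delta$, so $\|\theta(e\varphi_k)\|\to 0$) settles it correctly; as you observe, the asymmetry of the hypotheses (only $F$ is normed, so Lemma \ref{lem:auto-bdd}(a) is unavailable for $\theta^{-1}$) is precisely why the symmetric topological argument fails on the $\Omega$ side and a metric argument is needed. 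Your treatment of part (b) coincides with the paper's (Theorem \ref{thm:decomp}(b) plus the open mapping theorem applied to the bijective block $\theta_0$).
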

\begin{prf}
(a) If $e\in E$ with $\ti e =0$, then $\theta(e) = \ti \theta(e) =
0$ (as $\theta$ is separating and $F$ is $C_0(\Delta)$-convex), which gives
$e = 0$ (as $\theta$ is injective).
Hence, one can regard $\widetilde{\theta^{-1}}
= \theta^{-1}$ as well.
The fullness of $E$ and $F$ as well as the
surjectivity of $\theta$ and $\theta^{-1}$ ensure that $\FZ_\theta =
\emptyset$ and $\FZ_{\theta^{-1}} = \emptyset$.
Therefore, by Lemma
\ref{lem-diff-base}, we have two continuous maps
$$\tau: \Omega \rightarrow \Delta_\infty \quad {\rm and} \quad \sigma: \Delta \rightarrow \Omega_\infty$$
such that $\theta^{-1}\left(I_{\tau(\omega)}^F\right)\subseteq I_\omega^E$
($\omega\in \Omega$) and $\theta\left(I_{\sigma(\nu)}^E\right) \subseteq
I_\nu^F$ ($\nu\in \Delta$). Consequently, for any $\nu\in \Delta_0 :
= \sigma^{-1}(\Omega)$ and $\omega\in \Omega_0:=\tau^{-1}(\Delta)$,
we have
$$\sigma(\tau(\omega)) = \omega \quad {\rm and} \quad \tau(\sigma(\nu)) = \nu$$
(because $I^E_{\sigma(\tau(\omega))} \subseteq I^E_\omega$,
$I^F_{\tau(\sigma(\nu))} \subseteq I^F_\nu$, and $E$ as well as $F$ are
full).
If there exists $\nu\in \Delta \setminus
\FN_{\theta,\sigma}$ ($\FN_{\theta,\sigma}$ as in Lemma
\ref{lem:C_0-lin}(b)) with $\sigma(\nu) = \infty$, then $F =
\theta\left(K_\infty^E\right) \subseteq K_\nu^F$, which contradicts the fullness
of $F$.
Thus,
$$\Delta\setminus \FN_{\theta,\sigma}\ \subseteq\ \Delta_0.$$
On the other hand, as $\Delta_0\cap \FN_{\theta,\sigma}$ is a finite
set (by Lemma \ref{lem:C_0-lin}(a)\&(b) and the fact that $\sigma$
is injective on $\Delta_0$) and is open in $\Delta$ (by Lemma
\ref{lem:auto-bdd}(a)), we see that $\Delta_0\cap
\FN_{\theta,\sigma}$ consists of isolated points of $\Delta$.
Thus, $\sigma(\Delta_0\cap \FN_{\theta,\sigma})$ consists of isolated
points of $\Omega_0$ (as $\sigma$ restricts to a homeomorphism
from $\Delta_0$ to $\Omega_0$).
We want to show that
$$\Delta_0\cap
\FN_{\theta,\sigma}\ =\ \emptyset.$$
Suppose on the contrary that there is $\nu\in \Delta_0\cap
\FN_{\theta,\sigma}$.
We know that $\sigma(\nu)$ ($\neq \infty$) is
a non-isolated point of $\Omega_\infty$ (by Lemma
\ref{lem:C_0-lin}(b)).
Therefore, there exists a net
$\{\omega_i\}_{i\in I}$ in $\Omega\setminus \{\sigma(\nu)\}$
converging to $\sigma(\nu)$.
If $\{i\in I: \omega_i\in \Omega_0\}$
is cofinal, then there is a net in $\Omega_0 \setminus
\{\sigma(\nu)\}$ converging to $\sigma(\nu)$, which contradicts $\sigma(\nu)$ being an isolated point in $\Omega_0$.
Otherwise, $\omega_i\in \tau^{-1}(\infty)$ eventually, which gives
the contradiction that $\nu = \infty$ (note that
$\tau(\omega_i)\rightarrow \nu$ as $\nu\in \Delta_0$). Consequently,
$$\Delta\setminus \FN_{\theta,\sigma}\ =\ \Delta_0.$$
Suppose that $\FN_{\theta, \sigma} \neq \emptyset$ and $\nu\in
\FN_{\theta, \sigma}$.
Since $\FN_{\theta, \sigma}$ is an open
subset of $\Delta$ (by Lemma \ref{lem:auto-bdd}(a)), there exists
$V\in \CN_{\Delta}(\nu)$ with $V\subseteq \FN_{\theta,
\sigma}$.
Take any $f\in F$ such that $f(\nu) \neq 0$ (by the fullness of
$F$) and $f$ vanishes outside $V$. Thus, $f\in I_\infty^F$ (as $V$
is compact) and so, $\theta^{-1}(f)(\omega) = 0$ for any $\omega \in
\tau^{-1}(\infty)$. On the other hand, for any $\omega\in \Omega_0$,
one has $\tau(\omega)\in \Delta_0$ and so, $f\in I_{\tau(\omega)}^F$
(as $f$ vanishes on the open set $\Delta_0$ containing
$\tau(\omega)$) which implies that $\theta^{-1}(f)(\omega) = 0$.
Hence $\theta^{-1}(f) = 0$ which contradicts the injectivity of
$\theta^{-1}$. Therefore, $\FN_{\theta,\sigma} = \emptyset$. Now,
part (a) follows from Lemma \ref{lem:C_0-lin}(c).

\smnoind (b) This follows directly from Theorem \ref{thm:decomp}(b).
\end{prf}

\medskip

One can apply the above to the case when $F$ is a full Hilbert
$C_0(\Delta)$-module.
Another direct application of Theorem \ref{thm:biseparating} is the following theorem which extends and enriches a result of Chan \cite{Ch90} (by removing the boundedness assumption on $\theta$), as well as results concerning the product bundle cases discussed in \cite{AJ03studia, GJW03}.
Notice that if $(\Omega,\{\Xi_x\}, E)$ is a continuous fields of Banach spaces over a locally compact Hausdorff space $\Omega$ (as defined in \cite{Di77, Fe61}), then $E$ is a full essential Banach $C_0(\Omega)$-normed module.

\medskip

\begin{thm}
Let $(\Omega,\{\Xi_x\}, E)$ and $(\Delta, \{\Lambda_y\}, F)$ be continuous fields of Banach spaces over locally compact Hausdorff spaces $\Omega$ and $\Delta$ respectively.
Let $\theta :  E \to  F$ be a bijective linear map such that both $\theta$ and its inverse $\theta^{-1}$ are separating.
Then there is a homeomorphism $\sigma: \Delta \to \Omega$ and a bijective linear operator $H_\nu: \Xi_{\sigma(\nu)}\to \Lambda_\nu$ such that
$$
\theta(f)(\nu)\ =\ H_\nu(f(\sigma(\nu))) \quad (f\in E, \nu\in \Delta).
$$
Moreover, at most finitely many $H_\nu$ are unbounded, and this can happen only when $\nu$ is an isolated point in $\Delta$.
In particular, if $\Omega$ (or $\Delta$) contains no isolated point, then $\theta$ is automatically bounded.
\end{thm}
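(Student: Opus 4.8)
The plan is to deduce everything from Theorem \ref{thm:biseparating}. First, the remark preceding the statement tells us that $E$ and $F$ are full essential Banach $C_0(\Omega)$- and $C_0(\Delta)$-normed modules respectively, so the hypotheses of Theorem \ref{thm:biseparating} are met. Since both modules are $C_0$-normed they are $C_0$-convex, and I would identify $E$ with $\Gamma_0(\Xi^E)$ and $F$ with $\Gamma_0(\Xi^F)$, writing $\Xi_x = \Xi^E_x$ and $\Lambda_\nu = \Xi^F_\nu$; under this identification $\ti\theta = \theta$ and the evaluation $e\mapsto e(\omega)$ is the quotient map $E\to E/K^E_\omega = \Xi_\omega$. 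Theorem \ref{thm:biseparating}(a) then produces a homeomorphism $\sigma:\Delta\to\Omega$ with $\theta(e\cdot\varphi) = \theta(e)\cdot(\varphi\circ\sigma)$, which evaluated at $\nu$ reads $\theta(e\varphi)(\nu) = \theta(e)(\nu)\,\varphi(\sigma(\nu))$.

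Next I would manufacture the fibre maps $H_\nu$. The proof of Theorem \ref{thm:biseparating}(a) establishes that $\FN_{\theta,\sigma} = \emptyset$, i.e.\ $\theta(K^E_{\sigma(\nu)})\subseteq K^F_\nu$ for every $\nu$; equivalently, $\theta(e)(\nu)$ depends only on the value $e(\sigma(\nu))\in\Xi_{\sigma(\nu)}$. Since $e\mapsto e(\sigma(\nu))$ is a linear surjection of $E$ onto $\Xi_{\sigma(\nu)}$, I can unambiguously define a linear map $H_\nu:\Xi_{\sigma(\nu)}\to\Lambda_\nu$ by $H_\nu(e(\sigma(\nu))) := \theta(e)(\nu)$, which is exactly the asserted formula $\theta(f)(\nu) = H_\nu(f(\sigma(\nu)))$. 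Surjectivity of $H_\nu$ follows from surjectivity of $\theta$ together with surjectivity of the evaluation $F\to\Lambda_\nu$. For injectivity, hence bijectivity, I would run the same construction on the biseparating map $\theta^{-1}$ with the homeomorphism $\sigma^{-1}$ (the proof of Theorem \ref{thm:biseparating}(a) already identifies the inverse map covering $\sigma^{-1}$), obtaining linear maps $\Lambda_\nu\to\Xi_{\sigma(\nu)}$ that are readily checked to invert the $H_\nu$.

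It remains to control the norms. Theorem \ref{thm:biseparating}(b) supplies isolated points $\nu_1,\dots,\nu_n$ of $\Delta$ for which $\theta$ restricts to a Banach-space isomorphism $\theta_0: E_{\Omega_\theta}\to F_{\Delta_\theta}$, where $\Delta_\theta = \Delta\setminus\{\nu_1,\dots,\nu_n\}$ and $\Omega_\theta = \sigma(\Delta_\theta)$. For $\nu\in\Delta_\theta$ and $\xi\in\Xi_{\sigma(\nu)}$, choosing $e\in E_{\Omega_\theta}$ with $e(\sigma(\nu)) = \xi$ and $\|e\|$ arbitrarily close to $\|\xi\|$ (possible because the fibre norm is the quotient norm in a $C_0$-normed module) gives $\|H_\nu(\xi)\| = \|\theta_0(e)(\nu)\| \le \|\theta_0\|\,\|e\|$, whence $\|H_\nu\|\le\|\theta_0\|$. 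Thus every $H_\nu$ with $\nu\notin\{\nu_1,\dots,\nu_n\}$ is bounded by the common constant $\|\theta_0\|$, and an unbounded $H_\nu$ can occur only at one of the isolated points $\nu_i$. Finally, if $\Delta$ has no isolated point then $n=0$; and if $\Omega$ has no isolated point the same holds, since the homeomorphism $\sigma$ transports isolated points between $\Delta$ and $\Omega$. In either case $\theta = \theta_0$ is bounded.

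The main obstacle is essentially bookkeeping: the heavy lifting (existence and continuity of $\sigma$, the vanishing of $\FN_{\theta,\sigma}$, and the finite exceptional set of isolated points) is already packaged in Theorem \ref{thm:biseparating}, so the work here is to pass correctly from the abstract module identity to the pointwise fibrewise formula and to verify that a nearly norm-attaining section exists through each point of the field. This last fact underlies both the well-definedness of $H_\nu$ and the uniform bound $\|H_\nu\|\le\|\theta_0\|$, and is the one place where the $C_0(\Omega)$-normed (rather than merely convex) hypothesis is genuinely used.
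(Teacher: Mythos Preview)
Your proof is correct and follows exactly the route the paper intends: the paper offers no proof beyond declaring the theorem a direct application of Theorem~\ref{thm:biseparating}, and you have supplied the details (defining $H_\nu$ via $\FN_{\theta,\sigma}=\emptyset$, obtaining its inverse from the symmetric construction for $\theta^{-1}$, and extracting the uniform bound $\|H_\nu\|\le\|\theta_0\|$ from part~(b)). One small correction to your closing remark: the existence of nearly norm-attaining sections through a prescribed fibre value needs only $C_0$-convexity (since then $E\cong\Gamma_0(\Xi^E)$ and the fibre norm is the quotient norm); the genuine use of the $C_0$-\emph{normed} hypothesis is inside Theorem~\ref{thm:biseparating} itself, where Lemma~\ref{lem:auto-bdd} relies on continuity of the norm function on the bundle.
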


\bigskip

\appendix\section{Bounded local linear maps are $A$-linear}\label{s:app}

\medskip

\begin{prop}\label{prop:local+bounded=modulemaps}
Let $A$ be a $C^*$-algebra, and let $\theta$ be a bounded linear map
from a Banach right $A$-modules $E$ into a Hilbert $A$-module $F$.
Then $\theta$ is a right $A$-module map if and only if $\theta$ is
\emph{local} (in the sense that $\theta(e)a = 0$ whenever $e\in E$
and $a\in A$ with $ea = 0$).
\end{prop}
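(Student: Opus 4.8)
The plan is to dispose of the forward implication at once and to obtain the converse by a reduction that replaces the Hilbert-module range $F$ by the $C^*$-algebra $A$ itself. If $\theta$ is an $A$-module map and $ea=0$, then $\theta(e)a=\theta(ea)=0$, so $\theta$ is local. For the converse, fix $z\in F$ and consider $\psi_z\colon E\to A$, $\psi_z(e):=\langle z,\theta(e)\rangle$. It is bounded (by $\|z\|\,\|\theta\|$), and since $\langle z,\theta(e)\rangle a=\langle z,\theta(e)a\rangle$ while locality gives $\theta(e)a=0=\theta(ea)$ whenever $ea=0$, it is again local, now as a map into the standard Hilbert $A$-module $A$. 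If every such $\psi_z$ is an $A$-module map, then $\langle z,\theta(ea)-\theta(e)a\rangle=0$ for all $z\in F$; taking $z=\theta(ea)-\theta(e)a$ and using nondegeneracy of the $F$-inner product forces $\theta(ea)=\theta(e)a$. Thus it suffices to treat the case in which the range is $A$.

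So the reduced problem is: a bounded local $\psi\colon E\to A$ is an $A$-module map. Writing $D(e,a):=\psi(ea)-\psi(e)a\in A$, locality yields the support condition $\psi(x)c=0$ whenever $xc=0$, whence $D(e,a)c=0$ whenever $ac=0$; for $a\ge 0$ this places $D(e,a)$ in the closed left ideal $\overline{Aa}=L(\mathrm{r.ann}(a))$, and one also has the cocycle identity $D(e,ab)=D(ea,b)+D(e,a)b$. The goal is $D\equiv 0$. Two special cases are essentially free and guide the general argument: when $A$ is commutative the claim follows from the body of the paper (Corollary \ref{cor:bun-al-bdd} applied to the Hilbert module $A\cong C_0(X)$), and when $A$ is finite dimensional one argues as in the matrix case, where the support condition forces $\psi(p)\in\overline{Ap}\cap\overline{A(1-p)}=\{0\}$ for projections $p$ and, combining the annihilator conditions coming from several noncommuting elements, pins down $\psi(a)=\psi(1)a$ (cf. Corollary \ref{cor:local>mod}).

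The main obstacle is a general (unital, say; the non-unital case being handled through an approximate unit) $C^*$-algebra $A$, which has too few projections for the exact ideal-intersection argument; moreover the failure of injectivity of the canonical map $\sim$ for Hilbert modules shows that one cannot reduce softly to the commutative case through a single self-adjoint element and $C^*(a)$, so the boundedness hypothesis must enter quantitatively. I would replace spectral projections by continuous functional calculus: set $m:=\psi(1)$ and, for $a\ge 0$ with $0\le a\le 1$ and $\epsilon>0$, choose positive bumps $\phi_0,\dots,\phi_n$ with $\sum_i\phi_i=1$, $\phi_i\phi_j=0$ for $|i-j|\ge 2$, small supports, and sample points $t_i$ with $\|a-\sum_i t_i\phi_i(a)\|<\epsilon$. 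Putting $g_i:=\psi(\phi_i(a))-m\,\phi_i(a)$, the support condition gives $g_i\in\overline{A\phi_i(a)}$ and $g_i\phi_j(a)=0$ for $|i-j|\ge 2$, while $\sum_i g_i=\psi(1)-m=0$; a short expansion then yields $\psi(a)-ma=\sum_i t_i g_i$ up to an error of size $O(\|\psi\|\,\epsilon)$. The heart of the proof is to show $\|\sum_i t_i g_i\|\to 0$: one must exploit $\sum_i g_i=0$ together with the near-disjointness of the supports and the $C^*$-norm and positivity of the range $A$ so that the noncommutative cross terms $g_i^*g_j$ do not accumulate. This is the step I expect to be genuinely hard, and it is exactly here that the boundedness of $\theta$ and the Hilbert-module ($C^*$) structure of the range are indispensable; the boundedness hypothesis is essential in general and can be dropped only under extra assumptions such as finite-dimensionality of $A$, as in Corollary \ref{cor:local>mod}.
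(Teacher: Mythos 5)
Your forward implication and your reduction to the case $F=A$ (via $\psi_z:=\langle z,\theta(\cdot)\rangle$, which is indeed bounded and local whenever $\theta$ is, and which recovers $\theta(ea)=\theta(e)a$ by taking $z=\theta(ea)-\theta(e)a$) are both correct, and the reduction is a genuinely nice simplification that the paper does not use. But the converse is not proved: you yourself flag the central estimate $\bigl\|\sum_i t_i g_i\bigr\|\to 0$ as the step you expect to be genuinely hard, and nothing in the proposal supplies it. The data available at that point --- $\sum_i g_i=0$ together with $g_i\phi_j(a)=0$ for $|i-j|\ge 2$ --- do not control $\bigl\|\sum_i t_i g_i\bigr\|$: the coefficients $t_i$ spread over all of $[0,1]$, so subtracting a multiple of $\sum_i g_i$ gains nothing, and the overlap of consecutive bumps blocks any orthogonality argument on the cross terms $g_i^*g_j$. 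A secondary soft spot is the claim $g_i\in\overline{A\phi_i(a)}$: locality only gives $\psi(x\phi_i(a))c=0$ whenever $\phi_i(a)c=0$, and in a general $C^*$-algebra the left annihilator of the right annihilator of $\phi_i(a)$ can be strictly larger than the closed left ideal $\overline{A\phi_i(a)}$. (Also, $\psi(1)$ is undefined, since $1\in A$ is not an element of $E$; you need to fix $x\in E$ and work with $g_i=\psi(x\phi_i(a))-\psi(x)\phi_i(a)$.)

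The paper's proof sidesteps exactly this difficulty by passing to the bidual: $\theta^{**}:E^{**}\to F^{**}$ is weak*-weak*-continuous (this is where boundedness enters), and for a spectral projection $p=\Phi(\chi_{\sigma(a)\cap(\alpha,\beta)})$ of $a\in A_+$ one approximates $p$ and $1-p$ monotonely by continuous functions $f_n$, $g_n$ of $a$ with $f_ng_n=0$; locality yields $\theta(xa_n)d_i=0$ and $\theta(xd_i)a_n=0$ for suitable approximants, and joint weak*-continuity of multiplication on bounded sets in the bidual of the linking algebra gives $\theta^{**}(xp)(1-p)=0$ and $\theta^{**}(x(1-p))p=0$, hence $\theta^{**}(xp)=\theta^{**}(x)p$; finally $a$ is approximated uniformly by linear combinations of spectral projections. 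In other words, the hard cross-term estimate is replaced by exact orthogonality of projections, which exist only in $A^{**}$. If you want to finish along your own lines, the natural fix is precisely to move your bump-function computation into the bidual and let the $\phi_i(a)$ converge weak* to orthogonal spectral projections; as written, the proposal is an outline whose key step is missing.
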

\begin{proof}
Suppose $\theta$ is local. Observe, first of all, that $E^{**}$ and
$F^{**}$ are unital Banach $A^{**}$-modules, and the bidual map
$\theta^{**}: E^{**} \rightarrow F^{**}$ is a bounded weak*-weak*-continuous linear map.
Fix $x\in E$ and $a\in A_+$, and let
$$\Phi: C(\sigma(a))^{**} \rightarrow A^{**}$$
be the map induced by the canonical normal $*$-homomorphism $\Psi:
M(A)^{**} \rightarrow A^{**}$.
Pick $\alpha, \beta\in \mathbb{R}_+$
with $\alpha < \beta$, and define $p := \Phi
(\chi_{\sigma(a)\cap(\alpha, \beta)})$.  Let $\{f_n\}$ and
$\{g_n\}$ be two bounded sequences in $C(\sigma(a))_+$ such that
$f_n g_n =0$, as well as
$$
\quad f_n \uparrow \chi_{\sigma(a)\cap(\alpha, \beta)}
\quad {\rm and} \quad g_n \downarrow \chi_{\sigma(a)\setminus (\alpha, \beta)}\quad {\rm pointwisely}.
$$
Note that as $\Psi(A) \subseteq A$, we have $a_n := \Phi(f_n)\in A$,
and we can write $b_n := \Phi(g_n)$ as $c_n+\gamma_n 1$ (where
$c_n\in A$ and $\gamma_n\in \mathbb C$).
Fix $n\in \mathbb{N}$.
Since
$a_n$ and $c_n$ commute, there is a locally compact Hausdorff space
$\Omega$ with $C^*(a_n,c_n) \cong C_0(\Omega)$.
By considering
$b_n\in C(\Omega_\infty)_+ \cong C^*(1,a_n,c_n)_+$, one can find a net
$\{d_i\}_{i\in I}$ in $C_0(\Omega)_+\subseteq A_+$ such that
$d_i\leq b_n$ ($i\in I$) and $d_i\rightarrow b_n$ pointwisely.
As $0
\leq d_i \leq b_n$ and $a_nb_n = 0$ in $C(\Omega_\infty)$, one knows
that $a_nd_i = 0$.
Now, the relation $\theta(xa_n)d_i = 0$ and
$\theta(xd_i)a_n = 0$ imply that $\theta^{**}(xa_n)b_n = 0$ and
$\theta^{**}(xb_n)a_n = 0$.
Since the multiplication in the bidual
of the linking algebra of $F$ is jointly weak*-continuous on
bounded subsets, we see that $\theta^{**}(xp)(1-p) = 0$ and
$\theta^{**}(x(1-p))p = 0$, which implies that $\theta^{**}(xp) =
\theta^{**}(x) p$.
Finally, there exists $r_k\in \mathbb{R}$ and
$\alpha_k, \beta_k\in \mathbb{R}_+$ such that $\alpha_k \leq
\beta_k$ and
$$\sup_{t\in \sigma(a)} \bl | a(t) - \sum_{k=1}^M r_k \chi_{\sigma(a)\cap(\alpha_k, \beta_k)}(t)\br | \rightarrow 0.$$
Thus, by the weak*-continuity again, we get $\theta^{**}(xa) =
\theta^{**}(x)a$ as required.
\end{proof}

\bigskip


\begin{thebibliography}{99}

\bibitem{Ab83}
Yu. A. Abramovich, \emph{Multiplicative representation of the
operators preserving disjointness}, Indag.\ Math.\ \textbf{45}
(1983), 265--279.




\bibitem{AN80}
E. Albrecht and M. M. Neumann, \emph{Automatic continuity of
generalized local linear operators}, Manuscripta Math.\ \textbf{32}
(1980), 263--294.


\bibitem{Ar04}
J. Araujo, \emph{Linear biseparating maps between spaces of
vector-valued differentiable functions and automatic continuity},
Adv. Math. \textbf{187} (2004), no.~2, 488--520.


\bibitem{AJ03studia}
J. Araujo and K. Jarosz, \emph{Automatic continuity of biseparating
maps}, Studia Math. \textbf{155} (2003), no.~3, 231--239.


\bibitem{Ar83}
W. Arendt, \emph{Spectral properties of Lamperti operators}, Indiana
Univ.\ Math.\ J. \textbf{32} (1983), 199--215.

\bibitem{AT05}
W. Arendt and S. Thomaschewski, \emph{Local operators and forms},
Positivity \textbf{9} (2005), 357--367.

\bibitem{BNT88}
E. Beckenstein, L. Narici, and A. R. Todd, \emph{Automatic
continuity of linear maps on spaces of continuous functions},
Manuscripta Math.\ \textbf{62} (1988), 257--275.


\bibitem{Ch90}
J. T. Chan, \emph{Operators with the disjoint support property}, {J.
Operator Theory} \textbf{24} (1990), 383--391.


\bibitem{Di77}
J. Dixmier, \emph{C*-algebras}, North-Holland publishing company,
Amsterdam--New York--Oxford, 1977.


\bibitem{DG83}
M. J. Dupr\'{e} and R. M. Gillette, \emph{Banach bundles, Banach
modules and automorphisms of $C\sp{*} $-algebras}, Research Notes in
Mathematics 92, Pitman (1983).

\bibitem{Fe61}
J. M. G. Fell, \emph{The structure of algebras of operator fields},
{Acta Math.}, \textbf{106} (1961),
 233--280.

\bibitem{FH94}
J. J. Font and S. Hern\'{a}ndez, \emph{On separating maps between
locally compact spaces}, Arch.\ Math.\ (Basel) \textbf{63} (1994),
158--165.

\bibitem{GJW03}
H.-L. Gau, J.-S. Jeang and N.-C. Wong, \emph{Biseparating linear
maps between continuous vector-valued function spaces}, {J.
Australian Math. Soc., Series A}, \textbf{74} (2003), no.~1,
101--111.

\bibitem{Ja90}
K. Jarosz, \emph{Automatic continuity of separating linear
isomorphisms}, Canad.\ Math.\ Bull.\ \textbf{33} (1990), 139--144.


\bibitem{JW96} J.-S. Jeang and N.-C. Wong,
  Weighted composition operators of $C_0(X)$'s, \emph{J.
    Math.\ Anal.\ Appl.} \textbf{201} (1996), 981--993.



\bibitem{KN01} R. Kantrowitz and M. M. Neumann, Disjointness preserving and local
operators on algebras of differentiable functions, Glasg. Math. J.
43 (2001), 295-309.

\bibitem{Na85}
R. Narasimhan, \emph{Analysis on real and complex manifolds},
Advanced Studies in Pure Mathematics \textbf{1}, North-Holland
Publishing Co., Amsterdam 1968

\bibitem{Pa84}
 B. de Pagter,
 \emph{A note on disjointness preserving operators},
 Proc.\ Amer.\ Math.\ Soc.\ \textbf{90} (1984), 543--550.


\bibitem{Pe60}
J. Peetre, \emph{R\'{e}ctification \`{a} l'article ``Une
caract\'{e}risation abstraite des op\'{e}rateurs
diff\'{e}rentiels''}, Math.\ Scand.\ \textbf{8} (1960), 116--120.


\bibitem{Sw62}
R. G. Swan, \emph{Vector bundles and projective modules}, Trans.\
Amer.\ Math.\ Soc.\ \textbf{105} (1962), 264--277.

\end{thebibliography}
\end{document}